\DeclareMathAlphabet{\mathpzc}{OT1}{pzc}{m}{it}
\crefname{equation}{equation}{equations}
\theoremstyle{plain}
\newtheorem{thm}{Theorem}[section]
\newtheorem{lm}[thm]{Lemma}
\newtheorem{cor}[thm]{Corollary}
\theoremstyle{definition}
\newtheorem{definition}[thm]{Definition}
\newtheorem{method}[thm]{Method}
\theoremstyle{remark}
\newtheorem{remark}[thm]{Remark}
\numberwithin{equation}{section}
\newcommand{\hypergeometric}[5]{\, _{#1}F_{#2}\left(\left.\begin{array}{c}#3\\#4\end{array}\right|#5\right)}
\title{An electrostatic model for the roots of discrete classical orthogonal polynomials}
\author{Joaquín S\'anchez-Lara}
\address{Departamento de Matem\'atica Aplicada, Universidad de Granada, Spain}
\email{jslara@ugr.es}
\date{\today}
\keywords{Classical Orthogonal polynomials;  Askey Tableau; Zeros; Electrostatic model; Equilibrium; Linear difference equations}
\subjclass[2020]{Primary:  42C05; Secondary: 30C15; 31A15; 33C45; 33C47}
\begin{document}

\begin{abstract}

An electrostatic model is presented to describe the behaviour of the roots of classical discrete orthogonal polynomials. Indeed, this model applies in the more general frame of  polynomial solutions of second-order linear difference equations
$$A\Delta_h\nabla_h y+B\Delta_h y+ C y=0\,,$$
where $A$, $B$ and $C$ are polynomials and 
$$\Delta_h f(x)=f(x+h)-f(x)\qquad \text{ and }\qquad \nabla_h f(x)=f(x)-f(x-h)$$
 with $h>0$.



\end{abstract}

\maketitle

\section{Introduction}

Given a measure $\mu$ (orthogonality measure) supported on $S\subset\mathbb{C}$, the sequence of orthogonal polynomials (OP) with respect to $\mu$ is a family of polynomials $\{p_n\}$ with $p_n$ of degree $n$ and satisfying $$\int_S x^j p_n(x)d\mu(x)\begin{cases}=0\,,& j=0,1,\dots,n-1\,,\\\neq 0\,,& j=n\,.\end{cases}$$
When $\mu$ is positive, $S\subset\mathbb{R}$ and $n$ is less than the cardinal of $S$, the existence of such $p_n$ is guarantied and its roots are all real, simple and are located on the convex hull of $S$ (if $S$ is an interval, the roots are indeed in its interior). 


One of the topics that has aroused considerable interest since the beginning of the theory of orthogonal polynomials is the analysis of their zeros, and a very elegant yet useful way to describe them was given by T. Stieltjes through the following electrostatic model \cite{Stieltjes1885c,Stieltjes1885a}. If a polynomial $p$ of degree $n$ with real roots $x_1<x_2<\dots<x_n$ satisfies a second-order linear differential equation with polynomial coefficients
\begin{equation}\label{ODEgen}
Ay''+By'+Cy=0\,,
\end{equation}
then its roots coincide with the following distribution of points. Consider $n$ unitary charges that can move freely along some continuum $\Sigma$ of the real line. A charge  at $x\in\mathbb{R}$ repels another charge placed at $y\in\mathbb{R}\setminus\{x\}$ with a force given by
\begin{equation}\label{ForceLog}
F_{x}^{\text{log}}(y):=\frac{1}{y-x}\,.
\end{equation}
Additionally there is an external field, $\phi$, which exerts an extra force over a charge located at $x$ given by
$$-\phi'(x)=\frac{B(x)}{2A(x)}\,.$$
In this scenario may appear a \emph{critical distribution}, i.e., a distribution such that the total force at each charge vanishes (the use of the adjective \emph{critical} will be clearer forward). This critical distribution is precisely the one given by the roots $x_1,\dots x_n$ of the polynomial solution of \ref{ODEgen}. Stieltjes is considered the father of this interpretation, although he used constrained in the case of an unbounded $\Sigma$ instead of external fields.

Let us see a brief sketch of the arguments that connect the differential equation \eqref{ODEgen} with this electrostatic interpretation. Evaluating \eqref{ODEgen} with $y=p$ at $x_j$ we find that
$$\frac{p''(x_j)}{2p'(x_j)}+\frac{B(x_j)}{2A(x_j)}=0\,,\qquad j=1,\dots n\,,$$
or equivalently
\begin{equation}\label{Eqgen}
\sum_{k=1,k\neq j}^n\frac{1}{x_j-x_k}+\frac{B(x_j)}{2A(x_j)}=0\,,\qquad j=1,\dots n\,.
\end{equation}
If we put a unitary charge at each $x_j$, $j=1,\dots, n$, then the sum in $k$ can be interpreted as the sum of the forces that the charges at $x_1,\dots,x_{j-1},x_{j+1},\dots, x_n$ exert over the charge at $x_j$. The term with the quotient $B/(2A)$ is interpreted as the force that the external field exerts over the charge $x_j$, so $\phi$ is a primitive of $-B/(2A)$. Then, identity \eqref{Eqgen} says that the total force at any $x_j$, $j=1,\dots, n$, vanishes, so this distribution is a critical distribution.

In the literature, this electrostatic model is usually expressed in terms of potentials and energies rather than forces. The energy of a system of unitary charges at $x_1<\dots<x_n$ under an external field $\phi$ is defined as
$$\mathcal{E}_\phi^{\text{log}}(x_1,\dots,x_n)=\sum_{j,k=1,j\neq k}^nV_{x_k}^{\text{log}}(x_j)+2\sum_{j=1}^n\phi(x_j)\,,$$
where $V_y^{\text{log}}$ is the potential created by a charge and is given by
$$V_y^{\text{log}}(x)=\log\frac{1}{|y-x|}\,,\qquad x\in\mathbb{R}\setminus\{x_j\}\,.$$
Due to their nature, these potentials are called \emph{logarithmic potentials}. Typically, the problem consists in, with a fixed $n$, minimizing the energy $\mathcal{E}_\phi^{\text{log}}$ among all the distributions of $n$ points in the continuum $\Sigma$ of the real line, that is, to find the \emph{equilibrium distribution}. Observe that a critical point for the energy $\mathcal{E}_\phi^{\text{log}}$ is determined by equations \eqref{Eqgen} (this is the reason for the use of the term \emph{critical} for the critical distributions).
%
%

Stieltjes was the first to apply these ideas to orthogonal polynomials. He applied them to classical orthogonal polynomials (see \cite{Stieltjes1885c,Stieltjes1885a}) obtaining the electrostatic interpretation of their roots (see table \ref{TableClas}). In particular, this interpretation allows us to intuit (and sometimes to prove) the behaviour of these zeros as we move some parameters. For instance, for Jacobi polynomials $P_n^{(\alpha,\beta)}$, if coefficient $\alpha$ increases, the repulsive charge located at $1$ increases its size, so the roots must move to the left. Another fact that can be intuited is the asymptotic as $n\to\infty$. The repulsive charges at $\pm 1$ become negligible as $n\to\infty$ since the force that they exerts over a charge at a root of $P_n^{(\alpha,\beta)}$ cannot compare to the force that the other $n-1$ charges at the rest of the roots of $P_n^{(\alpha,\beta)}$ exert. Then, once we normalize the total mass of the system of charges, the limit distribution must be a (continuous) distribution which remains in equilibrium in $[-1,1]$ by itself (i.e., without an external field), that is, the equilibrium measure in $[-1,1$]
$$\frac{d\nu(x)}{dx}=\frac{1}{\pi\sqrt{1-x^2}}\,.$$

 \renewcommand{\arraystretch}{1.8}
\begin{table}\label{TableClas}
\begin{center}
\begin{tabular}{|l|c|c|c|c|}
\hline
Family of OP& $A$ & $B$ & Support& External force\\[2mm]
\hline
Jacobi $P_n^{(\alpha,\beta)}$ & $x^2-1$ & $(\alpha+\beta+2)x+\alpha+\beta$ & $[-1,1]$ & $\displaystyle\frac{\alpha+1}{2(x-1)}+\frac{\beta+1}{2(x+1)}$\\[2mm]
\hline
Laguerre $L_n^{(\alpha)}$ & $x$ & $-x+\alpha+1$ & $[0,+\infty)$ & $\displaystyle\frac{\alpha+1}{2x}-\frac{1}{2}$\\[2mm]
\hline
Hermite $H_n$ & $1$ & $-2x$ & $\mathbb{R}$ & $-x$\\[2mm]
\hline
\end{tabular}
\caption{For the families of classical orthogonal polynomials: coefficients of the differential equation \eqref{ODEgen}, support of the orthogonality measure (which coincides with $\Sigma$) and the force that the external field produces.}
\end{center}
\end{table}

It was not until the end of the 20th century that this electrostatic interpretation was extended. It was M. E. H. Ismail (see \cite{Ismail2000a,Ismail2000b}) who found a way to obtain a differential equation like \eqref{ODEgen} for general orthogonal polynomials when the orthogonality measure $\mu$ is absolutely continuous (and some additional conditions on $\mu'$). From this differential equation, the electrostatic model follows from the previous commented ideas. Recently a different approach has been studied for orthogonal polynomials with respect to semiclassical orthogonality measures (measures such that the derivative of $\log(\mu')$ is a rational function) (see \cite{SanchezLara2023} for quasi orthogonal polynomials and type II multiple orthogonal polynomials and \cite{Bertola24} for degenerate orthogonal polynomials) but it also applies only to the case of absolutely continuous orthogonalities.

 Obtaining an electrostatic interpretation of the zeros of orthogonal polynomials with respect to a discrete measure has been an open problem until the work of Rutka and Smarzewski \cite{Smarzewski2023}.  There, the authors consider classical orthogonal polynomials with respect to a positive discrete measure supported in $\Sigma=\{\xi_0,\xi_1,\dots \xi_{N}\}$ with $N=\mathbb{N}\cup\{+\infty\}$ and $\xi_j=\xi_0+hj$ with $h>0$. The weight $\rho$ associated with $\mu$ satisfies a Pearson equation
 $$\Delta_h (A\rho)=B\rho\,,\qquad \left.A\rho x^j\right|_{\xi_0,\xi_N}=0\,,$$
 where $A$ and $B$ are polynomials of degree at most $\deg(A)\leq 2$ and $\deg (B)\leq 1$ respectively. Then, they define the functional
$$G(z_1,\dots,z_n;h)=\prod_{j,k=1,j\neq k}^n\left((z_j-z_k)^2-h^2\right)\,\prod_{j=1}^nA(z_j)\rho(z_j)\,,$$
which plays a similar role as $-\log(\mathcal{E}_\phi^{\log})$, and look for maximizing $G$ (minimizing the energy). They prove some difference properties about $G$ and the roots of the orthogonal polynomials, which shows that, although these roots may not correspond to the maximizing distribution for $G$, they are close to it.

In this paper we present a different approach that provides an electrostatic model for the roots of orthogonal polynomials that present differential properties through difference operators $\Delta_h$ and $\nabla_h$. Although some ideas are strongly influenced by \cite{Smarzewski2023}, the obtained results are not equivalent to those of \cite{Smarzewski2023} since the energies of both models are essentially different.

The paper is structured as follows. In Section \ref{Sec2} we introduce the electrostatic model in terms of forces as well as in terms of energies. There, we connect the polynomial solutions of a second-order linear difference equation with critical distributions of points (see Theorem \ref{TeoEDElechPos}). We will see that under some rather general conditions, the existence and uniqueness of the equilibrium distribution (minimizer of the energy) is guarantied (see Theorem \ref{TeoMinEnergy}). Moreover, we present a method (see Method \ref{Alg}) to obtain this equilibrium distribution, which in addition explains some properties of the zeros. In section \ref{Sec3} we apply these results to orthogonal polynomials in the Askey scheme which satisfy differential properties in terms of $\Delta_1$ and $\nabla_1$ and, as an example of the applicability of the results in section \ref{Sec2}, we obtain the monotonicity of the roots with respect to some of the parameters of these polynomials.

\section{Electrostatic model}\label{Sec2}



Consider the following electrostatic problem with $h>0$ fixed. We put $n$ free charges in some interval $[a,b]$ with $a,b\in\mathbb{R}\cup\{\pm\infty\}$. The force that a charge placed at $x$ exerts over another charge at $y$ is given by
\begin{equation}\label{Forceh}
F_x(y):=\frac{1}{2h}\log\frac{y-x+h}{y-x-h}\,,\qquad y\in \mathbb{R}\setminus[x-h,x+h]\,.
\end{equation}
We observe that there is a region $[x-h,x+h]$ around $x$ where we have not defined it, this is a forbidden region to any other charge (where the quotient inside the logarithm is negative). We shall call \emph{exclusion radius} to the parameter $h$. Apart from this zone, this force exhibits the expected properties that one can imagine:
\begin{itemize}
\item For $y>x+h$ is positive (repulsive) and its effect decreases as $y$ moves away from $x$. For $y\searrow x+h$ diverges to $+\infty$ and for $y\to+\infty$ converges to $0$.
\item For $y<x-h$ is negative (repulsive) and its effect decreases as $y$ moves away from $x$. For $y\nearrow x-h$ diverges to $-\infty$ and for $y\to-\infty$ converges to $0$.
\item It presents odd symmetry with respect to $x$
$$F_x(x+y)=-F_x(x-y)\,,\qquad \text{for }y>h\,.$$
\item It is invariant under translations, $F_{x+c}(y+c)=F_x(y)$ for any $c\in\mathbb{R}$.
\item The force that a charge at $x$ exerts on another charge at $y$ is the opposite of the force that the charge at $y$ exerts on the charge at $x$
    $$F_x(y)=-F_y(x)\,.$$
\end{itemize}
Moreover, in figure \ref{FigCamposFuerzas} we can see that this force is not so far from the force that the logarithmic potentials induce, $F^{\log}$. This similarity increases as the distance between $x$ and $y$ increases because of the known approximation
$$F_x(y)=\frac{1}{2h}\log\left(1+\frac{2h}{y-x-h}\right)= \frac{1}{y-x+h}+O\left(\frac{1}{(x-y-h)^2}\right)=F_x^{\log}(y)+O\left(\frac{1}{(x-y)^2}\right)\,,$$
as $y-x\to \pm \infty$. For instance, we can see that, with $h=1$, at a distance of 2 units from the charge, $F_0$ is approximately 10\% greater than $F_0^{\log}$ and at a distance of $3$ units, $F_0$ is only approximately 4\% greater than $F_0^{\log}$. Observe also that this force converges to that of logarithmic potentials when $h\searrow 0$
$$\lim_{h\searrow 0}F_x(y)=\frac{1}{y-x}=F_x^{\log}(y)\,.$$
\begin{figure}
\begin{center}
\includegraphics[width=7.5cm]{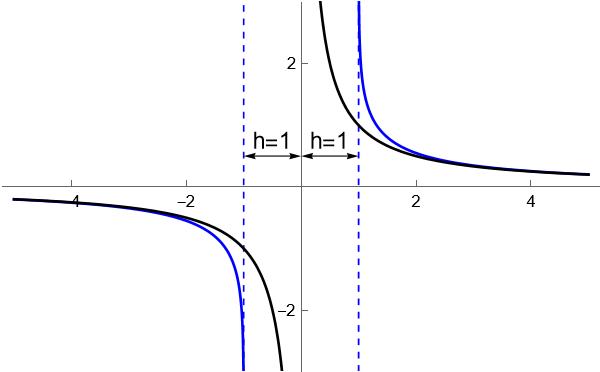}\hspace{1cm}
\includegraphics[width=7.5cm]{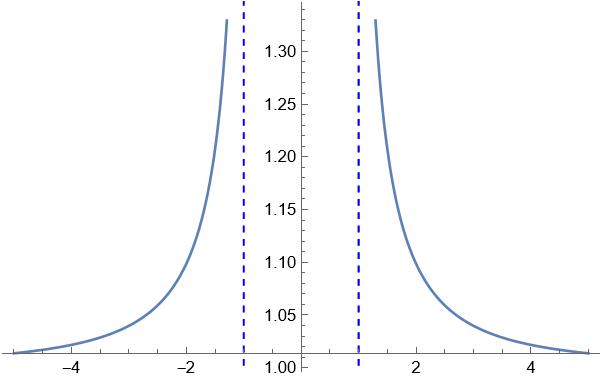}
\caption{Left: Force field given by $F_0$ with $h=1$ defined by \eqref{Forceh} in blue and force field $F_0^{\log}$ defined by \eqref{ForceLog} in black. Right: Rate between the two force fields. }\label{FigCamposFuerzas}
\end{center}
\end{figure}

\begin{remark} This charges has been considered unitary, if the size of the charge at $x$ is $\lambda$, then the force that it exerts is given by $\lambda F_x$. However, in this context, the exclusion radius, $h$, also influences on the force, as $h$ increases the absolute value of the force also increases. In section \ref{Sec3} we will make use of these parameters (size and exclusion radius) in order to describe some external fields.
\end{remark}

Additionally, we assume that these charges are under the effect of an external field $\phi$ which produces a force  given by
\begin{equation}\label{ExtForcehPos}
F_{\phi}(x):=-\phi'(x)=\frac{1}{2h}\log\left(1+\frac{B(x)}{A(x)}\right)\,,
\end{equation}
where $A$ and $B$ are two relative prime polynomials such that $1+B/A>0$ in $(a,b)$ (observe that $\phi$ can be defined not only in the open set $(a,b)$ but also at the endpoints $a$ and $b$ assuming that they are not $\pm\infty$).

\begin{definition}\label{DefCrDis}
A \emph{critical distribution} under the external field $\phi$ in $(a,b)$ is a distribution $a< x_1<x_2<\dots<x_n< b$ with $x_{j+1}>x_j+h$ for $j=1,\dots,n-1$, such that
\begin{equation}\label{TotalForcehPos}
\sum_{k=1,k\neq j}^nF_{x_k}(x_j)+F_{\phi}(x_j)=0\,,\qquad j=1,\dots,n.
\end{equation}
\end{definition}

The problem of finding critical distributions can also be expressed in terms of energies and potentials. The potential associated with a charge at $x$ is given by
\begin{equation}\label{Pothpos}
V_x(y)=-\int F_x(y)dy=\frac{x-y}{2h}\log\left(\frac{y-x+h}{y-x-h}\right)-\frac{1}{2}\log\big((y-x+h)(y-x-h)\big)\,,
\end{equation}
(see figure \ref{FigPotenciales})
which can be defined for $y\in\mathbb{R}\setminus(x-h,x+h)$, and
the energy of the system is
\begin{equation}\label{EnergiahPos}
\mathcal{E}_{\phi}(x_1,\dots,x_n)=\sum_{j,k=1,j\neq k}^nV_{x_k}(x_j)+2\sum_{j=1}^n\phi(x_j)\,.
\end{equation}
A critical distribution is just a critical point of $\mathcal{E}_\phi$. In addition, we use the following definition.
\begin{definition}
  An \emph{equilibrium distribution} under the external field $\phi$ in $(a,b)$ is a distribution $a\leq x_1<x_2<\dots<x_n\leq b$ with $x_{j+1}>x_j+h$ for $j=1,\dots,n-1$, such that it minimizes the energy $\mathcal{E}_\phi$ among all distributions of $n$ points in $(a,b)$ with mutual distances greater than $h$.
\end{definition}

\begin{figure}
\begin{center}
\includegraphics[width=7.5cm]{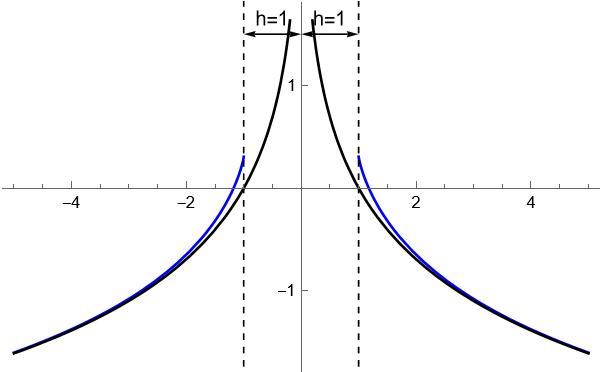}
\caption{Potentials $V_0^{\text{log}}$ in black and $V_0$ with $h=1$ in blue that an unitary charge at $x=0$ creates.}\label{FigPotenciales}
\end{center}
\end{figure}

Observe that if the interval $(a,b)$ is bounded, then such distributions consist of at most $\lceil(b-a)/h\rceil$ points where $\lceil x\rceil$ is the ceiling function defined as the smallest integer greater than or equal to $x$. In addition, if $b-a=Nh$ for some $N\in\mathbb{N}$, then the distribution $\{a+j h:j=0,\dots, N\}$ is the unique distribution of $N+1$ points such that their mutual distances are greater than or equal to $h$; of course, the forces at each charge diverge (so it has no sense from the point of view of forces), but the energy of such a system is a real number.


The following theorem establishes the relation between critical distributions and polynomial solutions of difference equations in terms of the $\Delta_h$ operator.

\begin{thm}\label{TeoEDElechPos} Consider two polynomials $A$ and $B$ and an interval $(a,b)$ in which $1+B/A>0$.
\begin{itemize}
\item[(i)]
If a distribution of points $a<x_1<x_2<\dots<x_n<b$ is a critical distribution under the external field \eqref{ExtForcehPos}, then the polynomial $p(x)=(x-x_1)\dots(x-x_n)$ is a solution of the difference equation
\begin{equation}\label{ED}
A\Delta_h\nabla_h y+B \Delta_h y+Cy=0\,,
\end{equation}
for some polynomial $C$ of degree $\max(\deg(A)-2,\deg(B)-1)$ at most.
\item[(ii)] Conversely, if a polynomial $p$ is a solution of a difference equation \eqref{ED} and its roots are simple, are in $(a,b)$ and their mutual distances are greater than $h$, then these roots correspond with a critical distribution.
\end{itemize}
\end{thm}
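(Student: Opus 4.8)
The plan is to recognise that, after multiplying by $2h$ and exponentiating, the additive force balance \eqref{TotalForcehPos} becomes a multiplicative identity whose factors are exactly the values $p(x_j\pm h)$. This converts the transcendental equilibrium condition into a polynomial one, which turns out to be precisely \eqref{ED} evaluated at the roots of $p$.

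First I would record two elementary product formulas. Since $p(x)=\prod_{k=1}^n(x-x_k)$, separating the $k=j$ factor gives $p(x_j+h)=h\prod_{k\neq j}(x_j-x_k+h)$ and $p(x_j-h)=-h\prod_{k\neq j}(x_j-x_k-h)$, hence
\begin{equation*}
\prod_{k\neq j}\frac{x_j-x_k+h}{x_j-x_k-h}=-\frac{p(x_j+h)}{p(x_j-h)}\,.
\end{equation*}
Multiplying \eqref{TotalForcehPos} by $2h$, inserting \eqref{Forceh} and \eqref{ExtForcehPos}, and exponentiating turns the sum of logarithms into
\begin{equation*}
-\frac{p(x_j+h)}{p(x_j-h)}\left(1+\frac{B(x_j)}{A(x_j)}\right)=1\,,
\end{equation*}
which, after clearing denominators, is equivalent to the polynomial relation
\begin{equation*}
A(x_j)\,p(x_j-h)+\bigl(A(x_j)+B(x_j)\bigr)\,p(x_j+h)=0\,,\qquad j=1,\dots,n\,.
\tag{$\star$}
\end{equation*}

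Next I would connect $(\star)$ with \eqref{ED}. Writing the operators explicitly, $\Delta_h\nabla_h p(x)=p(x+h)-2p(x)+p(x-h)$ and $\Delta_h p(x)=p(x+h)-p(x)$, so the polynomial $L:=A\,\Delta_h\nabla_h p+B\,\Delta_h p$, evaluated where $p$ vanishes, reduces to $L(x_j)=A(x_j)\bigl(p(x_j+h)+p(x_j-h)\bigr)+B(x_j)p(x_j+h)$, which is exactly the left side of $(\star)$. For part (i): a critical distribution satisfies $(\star)$, hence $L(x_j)=0$ at every (simple) root $x_j$ of $p$, so $p\mid L$ and $C:=-L/p$ is a polynomial satisfying $A\Delta_h\nabla_h p+B\Delta_h p+Cp=0$. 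Counting degrees, $\deg(\Delta_h\nabla_h p)=n-2$ and $\deg(\Delta_h p)=n-1$ yield $\deg L\le n+\max(\deg A-2,\deg B-1)$, whence $\deg C\le\max(\deg A-2,\deg B-1)$. For part (ii): if $p$ solves \eqref{ED}, then evaluating at a root gives $L(x_j)=-C(x_j)p(x_j)=0$, i.e.\ $(\star)$ holds, and reversing the exponentiation recovers \eqref{TotalForcehPos}.

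The step requiring genuine care—rather than being a formal manipulation—is the reversibility of the exponentiation: one must verify that all quantities under the logarithms are positive, so that passing between the additive and multiplicative forms is legitimate and sign-consistent. The hypothesis $x_{j+1}>x_j+h$ (equivalently $|x_j-x_k|>h$ for $k\neq j$) makes each factor $\tfrac{x_j-x_k+h}{x_j-x_k-h}$ strictly positive, since numerator and denominator share a sign, and it guarantees $p(x_j\pm h)\neq 0$; the standing assumption $1+B/A>0$ on $(a,b)$—which forces $A(x_j)\neq 0$, as $A$ and $B$ are coprime—makes the external factor positive and the division by $A(x_j)$ valid. Once these positivity and non-vanishing facts are in place, the passage between \eqref{TotalForcehPos}, the multiplicative identity and $(\star)$ is an equivalence in both directions, and the divisibility $p\mid L$ together with the degree count completes both parts. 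I expect the only real subtlety to be the sign bookkeeping in these factors; the algebraic core is the clean observation that $-p(x_j+h)/p(x_j-h)$ is the exponential of the interaction sum.
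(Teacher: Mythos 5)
Your proposal is correct and follows essentially the same route as the paper: exponentiate the force balance, identify the resulting products with $-p(x_j+h)/p(x_j-h)$, clear denominators to see that $A\Delta_h\nabla_h p+B\Delta_h p$ vanishes at the simple roots of $p$, divide and count degrees, and reverse the steps for (ii). The only difference is presentational — you make explicit the sign and positivity bookkeeping that the paper dismisses as ``straightforward computations'' — so there is nothing substantive to add.
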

\begin{proof}
Let us see (i). Equations \eqref{TotalForcehPos} are equivalent to
$$-\sum_{k=1,k\neq j}^n\frac{1}{2h}\log\left(\frac{x_j-x_k+h}{x_j-x_k-h}\right)=\frac{1}{2h}\log\left(1+\frac{B(x_j)}{A(x_j)}\right)\,,
\qquad j=1,\dots n\,.$$
With some straightforward computations we deduce the following
$$\prod_{k=1,k\neq j}^n\frac{x_j-x_k-h}{x_j-x_k+h}=1+\frac{B(x_j)}{A(x_j)}\,,\qquad j=1,\dots n\,,$$
adding the factor for $k=j$
$$-\frac{p(x_j-h)}{p(x_j+h)}=1+\frac{B(x_j)}{A(x_j)}\,,\qquad j=1,\dots n\,.$$
Multiplying by $A(x_j)p(x_j+h)$ and adding some summands $p(x_j)=0$ in the correct places, it yields
$$A(x_j)\Big(p(x_j+h)-2p(x_j)+p(x_j-h)\Big)+B(x_j)\Big(p(x_j+h)-p(x_j)\Big)=0\,,\qquad j=1,\dots,n\,,$$
that is,
\begin{equation}\label{pol}
A\Delta_h\nabla_h p+B\Delta_h p\,,
\end{equation}
vanishes at the roots of $p$. Since \eqref{pol} is a polynomial of degree $n+\max(\deg(A)-2,\deg(B)-1)$, we have proven that \eqref{pol} can be expressed as $-Cp$  with $C$ a polynomial of degree $\max(\deg(A)-2,\deg(B)-1)$, and then, \eqref{ED} follows.

The proof of (ii) follows by going backward in the previous identities.
\end{proof}

Our next goal is to provide conditions that guarantee the existence of equilibrium distributions and the properties that they could satisfy. The obtained results are expressed in terms of equilibrium distributions, but they also have consequences on the roots of polynomial solutions of difference equations; such consequences will be discussed later. The following construction is useful for our purposes.


\begin{definition}
We say that an external field $\phi$ in an interval $(a,b)$, possibly unbounded, defined by \eqref{ExtForcehPos} is \emph{G-convex} if  $1+B/A$ is positive and decreasing in $(a,b)$ (hence $\phi$ is a convex function) and:
 \begin{itemize}
 \item if $a\in\mathbb{R}$ then $A(a)=0$, but if $a=-\infty$ then $\lim_{x\to-\infty}(1+B/A)(x)\in(1,+\infty]$;
 \item if $b\in\mathbb{R}$ then $(A+B)(b)=0$, but if $b=+\infty$ then $\lim_{x\to+\infty}(1+B/A)(x)\in[0,1)$.
 \end{itemize}
\end{definition}

\begin{remark}\label{RemarkCampoExterno}
  Note that due to the definition using polynomials $A$ and $B$, the possibility $(a,b)=\mathbb{R}$ is not possible. If we were to consider more general types of external fields this possibility could be included; however, the ones we are considering are sufficient for our purposes. Observe also that if $b=+\infty$ or $a=-\infty$, then the behaviour of $\phi$ as $|x|\to+\infty$ is at least linear, i.e., there exist constants $c,d>0$ such that
  $$\phi(x)>c |x|\,,\qquad \text{ for any }x\in (a,b) \text{ such that }|x|>d\,.$$
\end{remark}

If $\phi$ is G-convex, then for any distribution of charges $x_1<\dots <x_m$ in $(a,b)$, the function
$$F_{x_1,\dots,x_m}(x):=\sum_{j=1}^m F_{x_j}(x)+F_\phi(x)\,,$$
is decreasing in each interval $(x_j+h,x_{j+1}-h)$ and also in $(a,x_1-h)$ and $(x_m+h,b)$ (if such intervals are not empty). Furthermore, the limits of $F_{x_1,\dots,x_m}$ at the endpoints of such intervals are $\pm\infty$, which implies that, in each of these intervals, there exists a unique point $x_j^*$ such that $F_{x_1,\dots,x_m}(x_j^*)=0$. Hence, the following functions are  well defined
\begin{align*}
&R_0(x_1,\dots,x_m)=x_0^* \text{ such that }F_{x_1,\dots,x_m}(x_0^*)=0 \text{ and }x_0^*\in(a,x_1-h)\,, 
\\
&R_j(x_1,\dots,x_m)=x_j^* \text{ such that }F_{x_1,\dots,x_m}(x_j^*)=0 \text{ and }x_j^*\in(x_j+h,x_{j+1}-h)\,, \qquad j=1,\dots,m-1
\\
&R_m(x_1,\dots,x_m)=x_m^* \text{ such that }F_{x_1,\dots,x_m}(x_m^*)=0 \text{ and }x_m^*\in(x_m+h,b)\,, 
\end{align*}
in any case where such intervals are not empty. Observe that the functions $R_j$, $j=1,\dots,m-1$, have an electrostatic interpretation: fixed unitary charges at $x_1<\dots<x_m$ in $(a,b)$,  $R_j(x_1,\dots,x_m)$ corresponds with the point in the interval $(x_j+h,x_{j+1}-h)$ where to put a free charge in such a way that it remains in equilibrium. Equivalently $R_j(x_1,\dots,x_m)$ gives the solution of the  problem consisting of finding the place to set a charge in $(x_j+h,x_{j+1}-h)$
with the minimum energy of the system (with fixed charges at $x_1,\dots,x_m$). The same interpretations hold for $R_0(x_1,\dots,x_m)$ in $(a,x_1+h)$ and for $R_m(x_1,\dots,x_m)$ in $(x_m+h,b)$.  Now we can set out a procedure to find the equilibrium distribution.

\begin{method}\textbf{Iterating Energy Decreasing Method.}\label{Alg}
   Consider $\phi$ a G-convex external field in $(a,b)$ and $a<x_1<x_2<\dots<x_n<b$ with $x_{j+1}-x_j>h$, $j=1,\dots,n-1$. Define
   $$x_j^{(0)}=x_j\,,\qquad j=1,\dots,n\,,$$
   and
   $$x_j^{(\ell+1)}=R_j(x_1^{(\ell+1)},\dots,x_{j-1}^{(\ell+1)},x_{j+1}^{(\ell)},\dots,x_n^{(\ell)})\,,\qquad j=1,\dots,n\,,\quad \ell\in\mathbb{N}\,.$$
\end{method}
We explain what this method makes. The starting point is a distribution $x_1<\dots<x_n$. In the first step, we release the charge at $x_1$ (the others remain fixed), which moves to the equilibrium position $R_0(x_2,x_3,\dots,x_n)$; this can be done because the interval where $x_1$ is free is $(a,x_2-h)$, which is not empty. In the second step, it is the turn for $x_2$, we set only it free and it moves to its equilibrium position. These steps are repeated for $x_3$, then for $x_4$ and so on until $x_n$. Once this first iteration has been made for all charges, it begins again with $x_1$, then with $x_2$ and so on. Observe that the property $x_{j+1}^{(\ell)}>x_{j}^{(\ell)}+h$ holds for any $j=1,\dots, n$ and $\ell\in\mathbb{N}$. Observe also that in each step, the energy of the system $\mathcal{E}_{\phi}(x_1,\dots,x_n)$ decreases since the free charge at $x_j$ moves to the minimum of the problem
$$\min_{x_j\in(x_{j-1}+h,x_{j+1}-h)}\mathcal{E}_\phi(x_1,x_2,\dots,x_n)\,.$$
Observe that, in spite of the obvious dissimilarities, this method is in essence a Gauss-Seidel type method.

\begin{thm}\label{TeoMinEnergy}
  If $\phi$ is a G-convex external field in $(a,b)$, possibly unbounded, $h>0$ and $n\leq\lceil(b-a)/h\rceil$, then there exists a unique critical distribution of $n$ charges, which is indeed an equilibrium distribution. This distribution can be obtained by using method \ref{Alg} with any input data $x_1^{(0)}<\dots<x_n^{(0)}$ such that $x_{j+1}^{(0)}>x_j^{(0)}+h$ for $j=1,\dots,n-1$.
%
\end{thm}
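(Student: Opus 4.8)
The plan is to reduce everything to the convexity of the energy \eqref{EnergiahPos} on the configuration space
$$D=\{(x_1,\dots,x_n):\ a<x_1,\ x_n<b,\ x_{j+1}-x_j>h,\ j=1,\dots,n-1\},$$
which, being an intersection of half-spaces, is open and convex, and is nonempty precisely because $n\le\lceil(b-a)/h\rceil$. First I would record two elementary facts. Writing $V_x(y)=g(y-x)$, the potential \eqref{Pothpos} satisfies $g'(u)=-F(u)$ and $g''(u)=1/(u^2-h^2)>0$ for $u>h$, and $g$ extends continuously to $u=h$ with $g(h^+)=-\log(2h)$. Since $g$ is even, one has $V_{x_k}(x_j)=V_{x_j}(x_k)$, so $\mathcal E_\phi=\sum_{j<k}2\,g(x_k-x_j)+2\sum_j\phi(x_j)$; each interaction term is the convex function $g$ composed with the linear map $(x_j,x_k)\mapsto x_k-x_j$, and $\phi$ is convex by G-convexity, so $\mathcal E_\phi$ is convex on $D$. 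I would upgrade this to strict convexity by noting that the interaction Hessian is positive in every direction except the translation direction $(1,\dots,1)$, along which the strict convexity of $t\mapsto\sum_j\phi(x_j+t)$ takes over (the map $1+B/A$ is decreasing and non-constant, so $\phi''>0$ off an isolated set).

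Next I would establish existence of a minimizer. If $(a,b)$ is bounded, $\mathcal E_\phi$ extends continuously to the compact closure $\overline D$ using $g(h^+)=-\log(2h)$, so the minimum is attained on $\overline D$. If $b=+\infty$ or $a=-\infty$, I would invoke Remark \ref{RemarkCampoExterno}: $\phi$ grows at least linearly, while the interactions decay only like $-\log|x_k-x_j|$ as charges separate, so the term $2\sum_j\phi(x_j)$ dominates and $\mathcal E_\phi\to+\infty$ as the configuration leaves every compact subset of $\overline D$; hence the sublevel sets are compact and a minimizer exists.

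The decisive step is to show the minimizer $x^*$ lies in the interior $D$, not on $\partial D$. This is the main obstacle, because — unlike the logarithmic model — the energy stays \emph{finite} on the faces $\{x_{j+1}-x_j=h\}$, so interiority cannot come from energy blow-up and must be read off from the gradient $\partial\mathcal E_\phi/\partial x_j=-2\big(\sum_{k\ne j}F_{x_k}(x_j)+F_\phi(x_j)\big)$. As a gap $x_{j+1}-x_j\searrow h$ the repulsive force $F_{x_j}(x_{j+1})\to+\infty$, so widening that gap (moving $x_{j+1}$ right or $x_j$ left, whichever stays in $\overline D$) strictly decreases the energy; since the configuration is finite, at least one such admissible move always exists, so no face $\{x_{j+1}-x_j=h\}$ can hold a minimizer. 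The G-convexity endpoint conditions ($A(a)=0$ at a finite $a$, $(A+B)(b)=0$ at a finite $b$) make $F_\phi$ diverge to $+\infty$ at $a$ and to $-\infty$ at $b$, ruling out $x_1=a$ or $x_n=b$ by the same argument. Thus $x^*\in D$ is a genuine critical point of $\mathcal E_\phi$, i.e.\ a critical distribution in the sense of Definition \ref{DefCrDis}, and being the minimizer it is the equilibrium distribution; strict convexity then forces it to be the unique critical point in $D$, whence the critical distribution is unique.

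Finally I would prove that Method \ref{Alg} converges to $x^*$. The sweep map $T$ keeps the iterates in $D$ (the $R_j$ preserve the strict gaps), and each coordinate update replaces $x_j$ by the unique minimizer of the strictly convex one-dimensional section $t\mapsto\mathcal E_\phi(\dots,t,\dots)$, so $\mathcal E_\phi(x^{(\ell)})$ is non-increasing and the iterates remain in a compact sublevel set. Passing to a subsequence $x^{(\ell_m)}\to\bar x\in\overline D$, the gradient/boundary argument of the previous paragraph gives $\bar x\in D$, so $T$ is continuous at $\bar x$; since $\mathcal E_\phi(x^{(\ell)})$ converges, $\mathcal E_\phi(T\bar x)=\mathcal E_\phi(\bar x)$, and because every coordinate update strictly lowers the energy unless that charge is already in equilibrium, $\bar x$ must be a fixed point of $T$, hence a critical distribution. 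By the uniqueness already established, $\bar x=x^*$, and as this holds for every convergent subsequence the whole sequence converges to $x^*$, independently of the admissible initial data.
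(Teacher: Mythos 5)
Your proposal is correct and rests on the same three pillars as the paper's proof: strict convexity of $\mathcal{E}_\phi$ on the admissible set, exclusion of boundary configurations via the divergence of the nearest-neighbour force combined with the counting condition $n\leq\lceil(b-a)/h\rceil$, and convergence of the Gauss--Seidel sweep by monotone energy plus compactness plus uniqueness. Two genuine differences are worth recording. First, you prove strict convexity by decomposing the interaction energy into pairwise terms $g(x_k-x_j)$ with $g''(u)=1/(u^2-h^2)>0$ and then handling the translation direction $(1,\dots,1)$ separately through $\phi''$, whereas the paper computes the full Hessian and applies Gershgorin's circle theorem; your route is arguably cleaner and more honest about the only direction in which the interaction part degenerates. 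Second, you establish existence of the minimizer by the direct method (continuous extension of $\mathcal{E}_\phi$ to $\overline{D}$ in the bounded case, coercivity from Remark \ref{RemarkCampoExterno} in the unbounded case) and only afterwards prove that Method \ref{Alg} converges to it, while the paper obtains the critical distribution itself as a subsequential limit of the iterates; your organization buys a self-contained existence statement independent of the algorithm. The one place you should tighten is the claim that ``the gradient/boundary argument of the previous paragraph gives $\bar x\in D$'': that argument, as you state it, rules out boundary \emph{minimizers}, whereas $\bar x$ is only known to be a subsequential limit of the iterates. The repair is short -- if $\bar x$ had a gap equal to $h$ adjacent to an open gap (and the counting condition guarantees such an adjacent open gap, or a free endpoint, exists), then for all iterates near $\bar x$ the corresponding one-dimensional update would lower the energy by a fixed $\delta>0$, contradicting the convergence of the monotone energy sequence -- but it is a distinct argument from the one you cite; the paper instead exploits the exact force-balance equations $f_j(X_j^{(\ell)})=0$ at the updated positions and lets the divergences propagate to a contradiction with $x_1=a$, $x_n=b$.
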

\begin{proof} We prove it first for the case of a bounded interval $(a,b)$. Due to the condition $n\leq\lceil(b-a)/h\rceil$ we can choose a distribution of points $a<x_1^{(0)}<\dots<x_n^{(0)}<b$ such that $x_{j+1}^{(0)}-x_j^{(0)}>h$ for any $j=1,\dots, n-1$ and apply the method \ref{Alg}. While the method is running, all distributions are in the interior of the compact set
$$K=\{(y_1,\dots,y_n)\in\mathbb{R}^n: a\leq y_1\,, y_n\leq b\,,\text{ and } y_{j+1}-y_j\geq h \text{ for }j=1,\dots,n-1\}\,.$$
Consider the vector sequence $\{(X_1^{(\ell)},X_2^{(\ell)},\dots,X_n^{(\ell)})\}_{\ell\in\mathbb{N}}$
where
\begin{align*}
&X_1^{(\ell)}=(x_1^{(\ell+1)},x_2^{(\ell)},x_3^{(\ell)}\dots,x_{n-1}^{(\ell)},x_n^{(\ell)})\,,\\
&X_2^{(\ell)}=(x_1^{(\ell+1)},x_2^{(\ell+1)},x_3^{(\ell)}\dots,x_{n-1}^{(\ell)},x_n^{(\ell)})\,,\\
&\hspace{2.8cm}\vdots\\
&X_{n-1}^{(\ell)}=(x_1^{(\ell+1)},x_2^{(\ell+1)},x_2^{(\ell+1)},\dots,x_{n-1}^{(\ell+1)},x_n^{(\ell)})\,,\\
&X_{n}^{(\ell)}=(x_1^{(\ell+1)},x_2^{(\ell+1)},x_2^{(\ell+1)},\dots,x_{n-1}^{(\ell+1)},x_n^{(\ell+1)})\,.
\end{align*}
Because this sequence is in the compact set $K^n$, there is an infinite subset $L\in\mathbb{N}$ such that the subsequence $\{(X_1^{(\ell)},X_2^{(\ell)},\dots,X_n^{(\ell)})\}_{\ell\in L}$ converges to a $(X_1,X_2,\dots,X_n)$ with
\begin{align*}
&X_1=(x_1,x_2^*,x_3^*,\dots, x_{n-1}^*,x_n^*)\in K\,,\\
&X_2=(x_1,x_2,x_3^*,\dots, x_{n-1}^*,x_n^*)\in K\,,\\
&\hspace{2cm}\\
&X_{n-1}=(x_1,x_2,x_3,\dots, x_{n-1},x_n^*)\in K\,,\\
&X_n=(x_1,x_2,x_3,\dots, x_{n-1},x_n)\in K\,,
\end{align*}
for some $x_1,\dots, x_n$ and $x_2^*,\dots, x_n^*$.
Observe that the energy associated with $X_j^{(\ell)}$ is decreasing in $j$ and in $\ell$, so the energies associated with all the distributions $X_1$,\dots, $X_n$ are the same. This fact has several consequences. The first is that $x_j=x_j^*$ for $j=1,\dots n$. This is due again to the way the method works, in the step that goes from $X_1$ to $X_2$, the charge at $x_2^*$ is set free, while all the others remain fixed; then, it moves to the unique equilibrium position $x_2$, which in addition minimizes the energy; however since the energy of the distributions associated with $X_1$ and $X_2$ equals, then $x_2^*=x_2$. The same argument can be repeated for the rest of the charges, so $X_1=X_2=\dots=X_n=:X$.

Consider the function $f:{\rm Int}(K^n)\longrightarrow \mathbb{R}^n$ given by $f(Y_1,\dots,Y_n)=(f_1(Y_1),f_2(Y_2),\dots, f_n(Y_n))$ and
$$f_j(Y)=\sum_{k=1,k\neq j}^nF_{y_k}(y_j)+F_\phi(y_j)\,,\qquad\text{with }Y=(y_1,\dots,y_n) \,.$$
 Because $f$ is continuous and $f(X_1^{(\ell)},\dots, X_n^{(\ell)})=(0,\dots,0)$ we have got to either $X_1$ is in the border of $K$ or $f(X,\dots,X)=(0,\dots,0)$ and so $X$ is a critical distribution. Let us see that the first option is not possible. If there were some $x_{j+1}=x_j+h$ with $j\in\{1,\dots,n-1\}$ then in the sum
$$f_{j+1}(X_{j+1}^{(\ell)})=\sum_{k=1}^jF_{x_k^{(\ell+1)}}(x_{j+1}^{(\ell+1)})
+\sum_{k=j+2}^nF_{x_k^{(\ell)}}(x_{j+1}^{(\ell+1)})+F_\phi(y_j)=0\,,
$$
the addend $F_{x_j^{(\ell+1)}}(x_{j+1}^{(\ell+1)})$ would diverge to $+\infty$, so it would be compensated with the addend $F_{x_{j+2}^{(\ell)}}(x_{j+1}^{(\ell+1)})$ diverging to $-\infty$ (this is the unique that could do it) and then $x_{j+1}=x_j+h$ for $j=1,\dots,n-1$. Moreover, using the same argument we could prove that $x_1=a$ and $x_n=b$, which is not possible by the hypothesis $n<\lceil(b-a)/h\rceil$. Hence, we have prove that $X$ is in the interior of $K$ and  is a critical distribution.

The function which provides the energy, $\mathcal{E}_\phi$, for any distribution in $K$ is strictly convex because
$$\frac{\partial^2 \mathcal{E}_\phi(y_1,\dots,y_n)}{\partial y_j^2}
=\frac{1}{2h}\sum_{\ell=1,\ell\neq j}^n\frac{1}{(y_j-y_\ell)^2-h^2}+\phi''(y_j)\,,\qquad j=1,\dots, n\,,$$
where all addends are strictly positive, and for $j\neq \ell$
$$\frac{\partial^2 \mathcal{E}_\phi(y_1,\dots,y_n)}{\partial y_j\partial y_\ell}=\frac{1}{2h}\frac{1}{(y_j-y_\ell)^2-h^2}\,,$$
so we have that the centre of the Gershgorin circles are strictly positive and their radii are less than the centres. Then, the eigenvalues of the hessian matrix associated with $\mathcal{E}_\phi$ are strictly positive, which proves the convexity of the energy function. This implies that $X$ corresponds to the unique equilibrium distribution and that the sequence $(X_1^{(\ell)},\dots,X_n^{(\ell)})$  converges with
$$\lim_{\ell\to\infty}x_j^{(\ell)}= x_j\,.$$

Let us now see now the proof when the interval $(a,b)$ is not bounded, for instance if $b=+\infty$. Let us suppose that, to obtain a contradiction, $\max\{x_j^{(k)}:j=1,\dots n\}$ is not bounded. If this occurs, then there exists a subset $N\subset\mathbb{N}$ with infinity elements such that $\{x_n^{(k)}\}_{k\in N}$ diverges to $+\infty$ and $a\leq x_{j}^{(k)}\leq x_{n}^{(k)}$ for any $j=1,\dots, n-1$, then by remark \ref{RemarkCampoExterno}, the addend of $\mathcal{E}_\phi(x_1^{(k)},\dots,x_n^{(k)})$
$$2\phi(x_n^{(k)})\,,\qquad k\in N\,,$$
diverges to $+\infty$ at least as $\text{const.}\,x_{n}^{(k)}$.
However, since the energy decreases with each iteration, the sum of the other addends must diverge to $-\infty$, indeed we would have
$$\sum_{j,\ell=1,j\neq \ell}^nV_{x_j^{(k)}}(x_\ell^{(k)})\to-\infty\,.$$
But, since as $k\to\infty$
\begin{align*}
&\sum_{j,\ell=1,j\neq \ell}^nV_{x_j^{(k)}}(x_\ell^{(k)})>n(n-1)V_{-x_n^{(k)}}(x_n^{(k)})\\
=& n(n-1) \left(\frac{x_n^{(k)}}{h}\log\left(\frac{2x_n^{(k)}+h}{2x_n^{(k)}-h}\right)
-\frac{1}{2}\log((2x_n^{(k)}+h)(2x_n^{(k)}-h))\right)\\
=&n(n-1)\left(\frac{x_n^{(k)}}{h}\left(\frac{h}{x_n^{(k)}}+O\left(\left(x_n^{(k)}\right)^{-3}\right)
\right)-\frac{1}{2}\log((2x_n^{(k)}+h)(2x_n^{(k)}-h))\right)\\
&=-n(n-1)\log(x_n^{(k)})+O(1)
\end{align*}
this sum does not diverge sufficiently quickly to compensate for the behaviour of $\phi(x_n^{(k)})$. Hence, the sequence $\{(x_1^{(k)},\dots,x_n^{(k)})\}_{k\in\mathbb{N}}$ is bounded and we can apply the same arguments as in the bounded case.
\end{proof}

Method \ref{Alg} and theorem \ref{TeoMinEnergy} can be combined to obtain some properties of the equilibrium distribution. In the following results, we focus on the monotonicity and  interlacing properties.

\begin{cor}\label{CorMonoCeroshpos}
  Let $\phi_1$ and $\phi_2$ two G-convex external fields in intervals $(a_1,b_1)$ and $(a_2,b_2)$ (possibly unbounded) and $x_{1,1}<\dots<x_{n,1}$ and $x_{1,2}<\dots<x_{n,2}$ their equilibrium distributions  respectively.
  \begin{itemize}
  \item[(i)] If $a_1=a_2$, $b_1\leq b_2$ and $F_{\phi_1}< F_{\phi_2}$ in $(a_1,b_1)$ then
  $$x_{j,1}< x_{j,2}\,,\qquad j=1,\dots, n\,.$$
  \item[(ii)] If $a_1\leq a_2$, $b_1=b_2$ and $F_{\phi_1}> F_{\phi_2}$ in $(a_2,b_2)$ then
  $$x_{j,1}> x_{j,2}\,,\qquad j=1,\dots, n\,.$$
  \end{itemize}
%
\end{cor}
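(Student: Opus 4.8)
The whole argument rests on a single monotonicity principle for the equilibrium maps $R_j$, which I will write as $R_j^{\phi}$ to record the external field in use. Recall that $R_j^\phi(y_1,\dots,y_{n-1})$ is the unique zero, inside the relevant gap, of the strictly decreasing function $g(x)=\sum_{k} F_{y_k}(x)+F_\phi(x)$. Two elementary facts drive everything. First, since $\partial F_{y}(x)/\partial y = 1/((x-y)^2-h^2)>0$ for $x\notin[y-h,y+h]$, raising any source charge $y_k$ raises $g$ pointwise, so $R_j^\phi$ is strictly increasing in each of its arguments. Second, if $F_\phi$ is replaced by a pointwise strictly larger force, $g$ again increases and, being strictly decreasing near its zero, its root moves strictly to the right; thus $R_j^\phi$ is strictly increasing in the field. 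As a corollary (composition of monotone maps) one full sweep of Method \ref{Alg} is monotone: if two admissible configurations for the same field satisfy $(y_j)\le(\tilde y_j)$ componentwise, the same holds after a sweep.

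For (i) I would run Method \ref{Alg} for the larger-force field $\phi_2$ starting from the equilibrium $x_{1,1}<\dots<x_{n,1}$ of $\phi_1$. This is legitimate: since $a_1=a_2$ and $b_1\le b_2$, the configuration $\{x_{j,1}\}$ lies in $(a_1,b_1)\subseteq(a_2,b_2)$ with gaps exceeding $h$, hence is an admissible input for $\phi_2$. Write $w_j^{(\ell)}$ for the iterates, $w_j^{(0)}=x_{j,1}$. The key point is that the very first sweep strictly pushes every charge to the right. Indeed, the equilibrium condition for $\phi_1$ gives $x_{1,1}=R_1^{\phi_1}(x_{2,1},\dots,x_{n,1})$, while $w_1^{(1)}=R_1^{\phi_2}(x_{2,1},\dots,x_{n,1})$; as $F_{\phi_1}<F_{\phi_2}$, the field-monotonicity gives $w_1^{(1)}>x_{1,1}$. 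Proceeding through the sweep, each evaluation point $x_{j,1}$ stays strictly to the right of the already-updated charges (which sit below $x_{j,1}-h$) and inside the common interval, so both monotonicities apply and yield $w_j^{(1)}>x_{j,1}$ for all $j$.

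Once $w^{(1)}>w^{(0)}$ is known, monotonicity of the sweep gives $w^{(\ell)}\le w^{(\ell+1)}$ for every $\ell$, so the sequence is nondecreasing; by Theorem \ref{TeoMinEnergy} it converges to the unique $\phi_2$-equilibrium $\{x_{j,2}\}$. Therefore
$$x_{j,2}=\lim_{\ell\to\infty}w_j^{(\ell)}\ \ge\ w_j^{(1)}\ >\ x_{j,1},\qquad j=1,\dots,n,$$
which is (i). Part (ii) is the mirror image: there $F_{\phi_1}>F_{\phi_2}$, so I run Method \ref{Alg} for the larger-force field $\phi_1$ starting from the $\phi_2$-equilibrium $\{x_{j,2}\}$, which now lies in $(a_2,b_2)\subseteq(a_1,b_1)$ because $a_1\le a_2$ and $b_1=b_2$; the identical first-sweep-plus-monotone-convergence argument gives $x_{j,1}>x_{j,2}$.

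I expect the genuine difficulty to be the strictness of the inequalities rather than the weak version. A naive coupling that iterates both fields in parallel from a common start only delivers $x_{j,1}\le x_{j,2}$ in the limit, because strict inequalities degrade to weak ones under limits. The device above avoids this: the strict gap is created in the first sweep and then merely preserved by monotone convergence, so it survives. This same device also resolves the only other delicate point, namely the mismatch of the two intervals, since the smaller-force equilibrium always sits inside the common subinterval on which the two external forces are comparable, making every $R_j$ evaluation well defined.
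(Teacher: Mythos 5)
Your proof is correct and takes essentially the same route as the paper's: run Method \ref{Alg} for the stronger field $\phi_2$ starting from the $\phi_1$-equilibrium, show every charge drifts monotonically to the right, and pass to the limit via the convergence guaranteed by Theorem \ref{TeoMinEnergy}. The only difference is presentational — you package the drift as monotonicity of the maps $R_j$ in their arguments and in the field, whereas the paper argues directly with the sign of the total force at each release step.
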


\begin{proof} We prove only \emph{(i)}, since the proof of \emph{(ii)} is analogous.
Note that when we apply the method \eqref{Alg}, if the total force over a charge at $x_j^{(k)}$
$$\sum_{\ell=1}^{j-1}F_{x_\ell^{(k+1)}}(x_j^{(k)})+\sum_{\ell=j+1}^{n}F_{x_\ell^{(k)}}(x_j^{(k)})
+F_\phi(x_j^{(k)})\,,$$
is positive, then $x_j^{(k+1)}>x_j^{(k)}$ (that is, the force is positive so the charge moves to the right). The consequence of this movement is that
$$F_{x_j^{(k+1)}}(x)>F_{x_j^{(k)}}(x)$$ for $x$ where the two forces are defined, so an additional positive force (pushing or pulling to the right) is added to every charge. Then, if we apply the method \ref{Alg} with the initial data $x_j^{(0)}=x_{j,1}$ for $j=1,\dots,n$ and the external field $\phi_2$, then the total force at each $x_j^{(0)}$ is $$\sum_{\ell=1}^{j-1}F_{x_\ell^{(0)}}(x_j^{(0)})+\sum_{\ell=j+1}^{n}F_{x_\ell^{(0)}}(x_j^{(0)})
+F_{\phi_2}(x_j^{(0)})=-F_{\phi_1}(x_j^{(0)})+F_{\phi_2}(x_j^{(0)})>0\,.$$
The consequences are that in any step, every charge moves to the right, then $\{x_j^{(k)}\}_{k\in\mathbb{N}}$ is an increasing sequence and by theorem \ref{TeoEDElechPos} converges to $x_{j,2}$ for any $j=1,\dots, n$. Then, the statement of the corollary holds.
%
%
%
\end{proof}

\begin{cor}\label{CorEntrelazamiento}
  Let $\phi$ a G-convex external field in an interval $(a,b)$ (possibly unbounded) and $x_{1,n}<\dots<x_{n,n}$ and $x_{1,n+1}<\dots<x_{n+1,n+1}$ the equilibrium distributions of $n$ and $n+1$ charges respectively. Then, these distributions interlace
  $$x_{1,n+1}<x_{1,n}<x_{2,n+1}<x_{2,n}<\dots <x_{n,n+1}<x_{n,n}<x_{n+1,n+1}\,.$$
\end{cor}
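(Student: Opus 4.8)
The plan is to derive the full interlacing chain from the two families of inequalities
$$x_{j,n+1}<x_{j,n}\quad(j=1,\dots,n)\qquad\text{and}\qquad x_{j,n}<x_{j+1,n+1}\quad(j=1,\dots,n),$$
which, once established, concatenate to give exactly the asserted ordering. Each family will be obtained by deleting one of the two extreme charges of the $(n+1)$-charge equilibrium and then running Method \ref{Alg} for the resulting $n$-charge system, exploiting the monotone-motion mechanism already isolated in the proof of Corollary \ref{CorMonoCeroshpos}: if every charge of a configuration feels a strictly positive (resp.\ negative) total force, then the iterates produced by Method \ref{Alg} move monotonically to the right (resp.\ left) and, by Theorem \ref{TeoMinEnergy}, converge to the unique $n$-charge equilibrium $x_{1,n}<\dots<x_{n,n}$.

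For the first family I would start from the $(n+1)$-charge equilibrium $x_{1,n+1}<\dots<x_{n+1,n+1}$ and discard the rightmost charge $x_{n+1,n+1}$, keeping $x_{1,n+1}<\dots<x_{n,n+1}$ as initial data for the $n$-charge problem under the same field $\phi$ (this is a legitimate input for Method \ref{Alg}, since the surviving charges still lie in $(a,b)$ with consecutive gaps exceeding $h$). Subtracting the deleted term from the $(n+1)$-charge equilibrium relations
$$\sum_{k=1,k\neq j}^{n+1}F_{x_{k,n+1}}(x_{j,n+1})+F_\phi(x_{j,n+1})=0$$
shows that the total force on each surviving charge equals $-F_{x_{n+1,n+1}}(x_{j,n+1})$, which is strictly positive because the removed charge lay to the right of $x_{j,n+1}$ and therefore pushed it leftwards. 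Hence every charge experiences a rightward force, the monotone mechanism applies, the iterates increase, and their limit is the $n$-charge equilibrium; comparing initial data with the limit yields $x_{j,n+1}<x_{j,n}$ for $j=1,\dots,n$.

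The second family is entirely symmetric: I would instead delete the leftmost charge $x_{1,n+1}$ and use $x_{2,n+1}<\dots<x_{n+1,n+1}$ (relabelled as $n$ charges) as input. The same subtraction now gives total force $-F_{x_{1,n+1}}(x_{j,n+1})<0$ on each surviving charge, since the removed charge was to their left and pushed them rightwards; the charges therefore drift monotonically to the left and converge to the same $n$-charge equilibrium, which gives $x_{j,n}<x_{j+1,n+1}$ for $j=1,\dots,n$. I expect the only delicate point to be the rigorous justification of the monotone-motion claim---namely, that an initial configuration in which all charges feel a one-signed force produces iterates that never reverse direction---but this is exactly the step carried out within the proof of Corollary \ref{CorMonoCeroshpos}, the sole change here being that the one-signed initial force originates from deleting a charge rather than from enlarging the external field; so it can be invoked essentially verbatim, and the remaining assembly of the two families into the stated chain is immediate.
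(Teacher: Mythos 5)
Your proof is correct and follows essentially the same route as the paper: both delete an extreme charge of the $(n+1)$-point equilibrium, observe that the surviving charges then feel a one-signed net force, and conclude via the monotone convergence of Method \ref{Alg} to the unique $n$-point equilibrium. The only cosmetic difference is that the paper absorbs the deleted charge into the external field (replacing $\phi$ by $\phi+V_{x_{n+1,n+1}}$, resp.\ $\phi+V_{x_{1,n+1}}$, on a shortened interval) so as to invoke Corollary \ref{CorMonoCeroshpos} as a black box, whereas you keep $\phi$ fixed and rerun the monotone-motion mechanism from inside that corollary's proof with the truncated configuration as initial data.
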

\begin{proof}
It is a direct consequence of corollary \ref{CorMonoCeroshpos}. The distribution $x_{1,n+1}<\dots<x_{n,n+1}$ is the equilibrium distribution in $(a,x_{n+1,n+1}-h)$ under the external field $\phi+V_{x_{n+1,n+1}}$, and its force
$$(F_\phi+F_{x_{n+1,n+1}})(x)<F_\phi(x)\,,\qquad x\in(a,x_{n+1,n+1}-h)\,.$$
Then $x_{j,n+1}<x_{j,n}$ for $j=1,\dots,n$.

In the same way, but by removing $x_{1,n+1}$, the distribution $x_{2,n+1}<\dots<x_{n+1,n+1}$ is the equilibrium distribution in $(x_{1,n+1}+h,b)$ under the external field $\phi+V_{x_{1,n+1}}$, and
$$(F_\phi+F_{x_{1,n+1}})(x)>F_\phi(x)\,,\qquad x\in(x_{1,n+1}+1,b)\,.$$
Then $x_{j,n}<x_{j+1,n+1}$ for $j=1,\dots,n$.
\end{proof}

Results in theorem \ref{TeoMinEnergy} and corollaries \ref{CorMonoCeroshpos} and \ref{CorEntrelazamiento} can be reinterpreted in terms of polynomial solutions of difference equations. Theorem \ref{TeoMinEnergy} says that if the polynomials $A$ and $B$ are such that $\phi$ is G-convex in an interval $(a,b)$, then there exists a unique polynomial $C_n$ such that there is a polynomial solution of the difference equation $A\Delta_h\nabla_h y+B \Delta_h y+C_ny=0\,,$ which is of degree $n$ and has simple roots in $(a,b)$ with the mutual distance between two consecutive roots greater than $h$. Corollary \ref{CorEntrelazamiento} also states the interlacing of the roots of these polynomial solutions for consecutive degrees. Finally corollary \ref{CorMonoCeroshpos} can be used to analyze the monotonicity of the roots of polynomial solutions of the same degree of these difference equations when the coefficients depends on some parameter. As a simple example of this, we formulate the following result.

\begin{cor}
  Let $A_\lambda$ and $B_\lambda$ two polynomials depending on the parameter $\lambda$ and such that there exists an interval $(a,b)$ ($a\in\mathbb{R}$ and $b\in (a+h,+\infty]$) satisfying the conditions:
  \begin{itemize}
  \item $1+B_\lambda/A_\lambda$ is positive and decreasing in $(a,b)$,
  \item $A_\lambda(a)=0\neq B_\lambda(a)$,
  \item If $b\in\mathbb{R}$ then $(A_\lambda+B_\lambda)(b)=0\neq A_\lambda(b)$ but if $b=+\infty$ then $\lim_{x\to+\infty}1+B(x)/A(x)\in[0,1)$.
  \end{itemize}
  Then, for each $n\leq\lceil(b-a)/h\rceil$ and $\lambda$ there exists a unique polynomial $C_{\lambda,n}$ such that the difference equation
  $$A_\lambda\Delta_h\nabla_h y+B_\lambda \Delta_h y+C_{\lambda,n}y=0\,,$$
  has a polynomial solution $p_{\lambda,n}$ of degree $n$ whose roots $x_{1,n,\lambda}<\dots<x_{n,n,\lambda}$ are in $(a,b)$ and $x_{j+1,n,\lambda}>x_{j,n,\lambda}+h$ for $j=1,\dots,n-1$.

  For a fixed $\lambda$ the roots of two consecutive polynomials $p_{\lambda,n+1}$ and $p_{\lambda,n}$ interlace
  $$x_{1,n+1,\lambda}<x_{1,n,\lambda}<x_{2,n+1,\lambda}<x_{2,n,\lambda}<\dots<x_{n-1,n,\lambda}
  <x_{n,n+1,\lambda}<x_{n,n,\lambda}<x_{n+1,n+1,\lambda}\,.$$

  If $B_\lambda(x)/A_\lambda(x)$ is strictly increasing in $\lambda$ for any $x\in(a,b)$, then
  $$x_{j,n,\lambda_1}<x_{j,n,\lambda_2}\,,\qquad $$
  for any $\lambda_1<\lambda_2$, $j=1,\dots,n,$ and $n\leq\lceil(b-a)/h\rceil$.
\end{cor}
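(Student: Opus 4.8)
The plan is to recognize that the three hypotheses on $A_\lambda$ and $B_\lambda$ are nothing but a restatement, for each fixed $\lambda$, of the defining conditions of a G-convex external field. Setting $F_{\phi_\lambda}(x)=\frac{1}{2h}\log\left(1+B_\lambda(x)/A_\lambda(x)\right)$ as in \eqref{ExtForcehPos}, the first hypothesis says $1+B_\lambda/A_\lambda$ is positive and decreasing in $(a,b)$; since $a\in\mathbb{R}$, the condition $A_\lambda(a)=0$ (together with $B_\lambda(a)\neq 0$, which forces $1+B_\lambda/A_\lambda\to+\infty$ as $x\searrow a$) is exactly the left-endpoint requirement; and the third hypothesis reproduces the right-endpoint requirement both when $b$ is finite, where $(A_\lambda+B_\lambda)(b)=0$, and when $b=+\infty$, where $\lim_{x\to+\infty}(1+B_\lambda/A_\lambda)\in[0,1)$. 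Hence $\phi_\lambda$ is G-convex in $(a,b)$ for every admissible $\lambda$, and the whole machinery of this section applies.

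For the existence and uniqueness statement I would invoke Theorem \ref{TeoMinEnergy}: for each $n\leq\lceil(b-a)/h\rceil$ there is a unique critical distribution $x_{1,n,\lambda}<\dots<x_{n,n,\lambda}$ in $(a,b)$ with consecutive gaps exceeding $h$ (which is moreover the equilibrium distribution). Theorem \ref{TeoEDElechPos}(i) then turns the monic polynomial $p_{\lambda,n}(x)=\prod_{j=1}^n(x-x_{j,n,\lambda})$ into a solution of the difference equation with a polynomial coefficient $C_{\lambda,n}$ of the prescribed degree, determined explicitly by $C_{\lambda,n}=-\left(A_\lambda\Delta_h\nabla_h p_{\lambda,n}+B_\lambda\Delta_h p_{\lambda,n}\right)/p_{\lambda,n}$. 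Uniqueness of $C_{\lambda,n}$ is the one point needing a short argument: any degree-$n$ polynomial solution whose $n$ roots lie in $(a,b)$ with gaps larger than $h$ has simple roots (the strict gap condition forbids multiplicities), so by Theorem \ref{TeoEDElechPos}(ii) those roots form a critical distribution; uniqueness of the critical distribution from Theorem \ref{TeoMinEnergy} forces the polynomial to equal $p_{\lambda,n}$, and then $C_{\lambda,n}$ is pinned down by the displayed formula.

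The interlacing statement is immediate from Corollary \ref{CorEntrelazamiento} applied to the single G-convex field $\phi_\lambda$: its equilibrium distributions of $n$ and $n+1$ charges are precisely the root sets of $p_{\lambda,n}$ and $p_{\lambda,n+1}$, and these interlace. For the monotonicity in $\lambda$ I would compare forces: because $t\mapsto\frac{1}{2h}\log(1+t)$ is strictly increasing, for $\lambda_1<\lambda_2$ the hypothesis that $B_\lambda/A_\lambda$ is strictly increasing in $\lambda$ gives $B_{\lambda_1}/A_{\lambda_1}<B_{\lambda_2}/A_{\lambda_2}$ pointwise on $(a,b)$, hence $F_{\phi_{\lambda_1}}<F_{\phi_{\lambda_2}}$ there. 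Since the interval $(a,b)$ is common to both values, so that $a_1=a_2=a$ and $b_1=b_2=b$, Corollary \ref{CorMonoCeroshpos}(i) yields $x_{j,n,\lambda_1}<x_{j,n,\lambda_2}$ for all $j$, as claimed.

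I would expect no genuine difficulty here: the result is essentially an assembly of Theorems \ref{TeoEDElechPos} and \ref{TeoMinEnergy} together with Corollaries \ref{CorEntrelazamiento} and \ref{CorMonoCeroshpos}. The only steps deserving care are the verification that the stated hypotheses coincide with G-convexity (in particular the sign and divergence behaviour of $1+B_\lambda/A_\lambda$ at the endpoints) and the uniqueness of $C_{\lambda,n}$, which must use both implications of Theorem \ref{TeoEDElechPos} in tandem with the uniqueness of the critical distribution.
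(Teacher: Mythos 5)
Your proposal is correct and matches the paper's intent exactly: the paper offers no separate proof, presenting the corollary as a direct reinterpretation of Theorem \ref{TeoMinEnergy}, Theorem \ref{TeoEDElechPos} and Corollaries \ref{CorEntrelazamiento} and \ref{CorMonoCeroshpos}, which is precisely the assembly you carry out. Your added care on the identification of the hypotheses with G-convexity and on the uniqueness of $C_{\lambda,n}$ only makes explicit what the paper leaves implicit.
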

\begin{remark} Observe that, in contrast to the Markov theorem for analyzing the monotonicity of these roots (see for instance \cite[Theorem 7.1.1]{IsmailBook2005} for a general version in the context of orthogonal polynomials), there is no need to use hypotheses on the continuity or differentiability of polynomials $A_\lambda$ and $B_\lambda$ with respect to $\lambda$. Furthermore, $\lambda$ could move in any subset (for instance a discrete one) of the real line.
\end{remark}

Finally, we study the extension of this problem to several disjoint intervals, although to simplify notation and ideas, we restrict ourselves to the cases of two intervals because cases with more intervals can be analyzed with the same definitions and results. Consider two disjoint intervals $(a,b)$ and $(c,d)$. We distribute charges in both intervals with the same forces as those given in \eqref{Forceh} (and with the same exclusion regions); this means that the potentials are also given by \eqref{Pothpos}. An external field of the type \eqref{ExtForcehPos} acts over both intervals, so we need the following assumption
$$1+\frac{B(x)}{A(x)}>0\,, \qquad x\in (a,b)\cup (c,d)\,.$$
The energy of the system is also given by \eqref{EnergiahPos}. A \emph{critical distribution} can be defined just as in definition \ref{DefCrDis} but we made a slightly different adaptation of the concept of \emph{equilibrium distribution}.
\begin{definition}
  Given $(n_1,n_2)\in\mathbb{N}^2$, we say that a distribution $x_1,\dots x_{n_1+n_2}$ is a $(n_1,n_2)$-equilibrium distribution under the external field $\phi$ if it corresponds to the solution of the minimizing problem
  $$\min\left\{\mathcal{E}_\phi(x_1,\dots,x_{n_1+n_2}): \begin{array}{l}x_1,\dots,x_{n_1}\in (a,b)\,,\, x_{n_1+1},\dots,x_{n_1+n_2}\in (c,d),\\
   |x_j-x_k|>h, j,k\in\{1,\dots,n_1+n_2\}, j\neq k\end{array}\right\}\,.$$
\end{definition}

Theorem \ref{TeoEDElechPos} is valid in this context too, thus there is an equivalence between critical distributions in two (or more) intervals and polynomial solutions of difference equations of type \eqref{ED}.

\begin{definition}\label{DefGconvex2}
  Given polynomials $A$ and $B$, such that $1+B/A>0$ in $(a,b)\cup (c,d)$, we say that the external field given by \eqref{ExtForcehPos} is G-convex in $(a,b)\cup (c,d)$ if its restrictions on $(a,b)$ and $(c,d)$ are both a G-convex external field in $(a,b)$ and $(c,d)$ respectively.
\end{definition}

Then, if the external field is G-convex, the method \ref{Alg} also works in this scenario and theorem \ref{TeoMinEnergy} is still valid (the proof is the same, observe that $K$ is a convex set, although $(a,b)\cup (c,d)$ does not, therefore the same arguments prove that the energy is a convex function). We only need the assumption that such amount of charges can be placed at $(a,b)$ and $(c,d)$ in such a way that the mutual distances are greater than $h$. We state the analogous to corollary \ref{CorMonoCeroshpos}  since the small details that appear.

\begin{cor}\label{CorMonoCeroshpos2}
Let $\phi_1$ and $\phi_2$ two G-convex external fields in the $(a_1,b_1)\cup(c_1,d_1)$ and $(a_2,b_2)\cup(c_2,b_2)$ (possibly unbounded) respectively and $x_{1,1}<\dots<x_{n_1+n_2,1}$ and $x_{1,2}<\dots<x_{n_1+n_2,2}$ their $(n_1,n_2)$-equilibrium distributions respectively.
  \begin{itemize}
  \item[(i)] If $a_1=a_2$, $c_1=c_2$, $b_1\leq b_2$ $d_1\leq d_2$ and $F_{\phi_1}< F_{\phi_2}$ in $(a_1,b_1)\cup(c_1,d_1)$ then
  $$x_{j,1}< x_{j,2}\,,\qquad j=1,\dots, n_1+n_2\,.$$
  \item[(ii)] If $a_1\leq a_2$, $b_1=b_2$, $c_1\leq c_2$, $d_1=d_2$ and $F_{\phi_1}> F_{\phi_2}$ in $(a_2,b_2)\cup(c_2,d_2)$ then
  $$x_{j,1}> x_{j,2}\,,\qquad j=1,\dots, n_1+n_2\,.$$
  \end{itemize}
\end{cor}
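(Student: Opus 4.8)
The plan is to mirror the proof of Corollary \ref{CorMonoCeroshpos} almost verbatim, since the two-interval version of Theorem \ref{TeoMinEnergy} has already been declared valid (the energy $\mathcal{E}_\phi$ is still strictly convex on the compact set $K$, even though the configuration space $(a,b)\cup(c,d)$ is not convex, because $K$ itself \emph{is} convex and the Hessian computation is unchanged). First I would recall the one-step monotonicity mechanism from the proof of Corollary \ref{CorMonoCeroshpos}: whenever the total force on the charge sitting at $x_j^{(k)}$ is positive, the method \ref{Alg} moves that charge strictly to the right, $x_j^{(k+1)}>x_j^{(k)}$, and since $F_{x_j^{(k+1)}}(x)>F_{x_j^{(k)}}(x)$ wherever both are defined, this rightward displacement adds a further positive force to every other charge. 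This ratchet effect is purely local in the force law $F_x$ and does not care whether the charges inhabit one interval or two, so it transfers to the present setting with no modification.

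Next I would verify that the initial configuration is legitimate for the two-interval method and that the initial total force at every charge is positive. Under hypothesis \emph{(i)} we have $a_1=a_2$, $c_1=c_2$, $b_1\le b_2$ and $d_1\le d_2$, so the equilibrium configuration $x_{j,1}$ for $\phi_1$ lies inside the larger region $(a_2,b_2)\cup(c_2,d_2)$ and respects the exclusion distances; hence it is an admissible input for method \ref{Alg} run with the field $\phi_2$. Plugging $x_j^{(0)}=x_{j,1}$ into the total-force expression, the interaction sum among the charges is exactly the negative of the external force of $\phi_1$ (because the $x_{j,1}$ form a critical distribution for $\phi_1$), so the total force under $\phi_2$ equals
$$-F_{\phi_1}(x_j^{(0)})+F_{\phi_2}(x_j^{(0)})>0\,,$$
using the assumption $F_{\phi_1}<F_{\phi_2}$ on $(a_1,b_1)\cup(c_1,d_1)$. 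Note that here each charge $x_j^{(0)}$ lies in whichever interval it originally occupied, and the hypothesis supplies the strict inequality on both pieces of the domain, so positivity holds uniformly for all $j=1,\dots,n_1+n_2$.

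With these two ingredients in place, the monotone ratchet guarantees that at every iteration each charge moves strictly to the right, so the sequences $\{x_j^{(k)}\}_{k\in\mathbb{N}}$ are increasing for every $j$. By the two-interval version of Theorem \ref{TeoMinEnergy} the method converges to the unique $(n_1,n_2)$-equilibrium distribution of $\phi_2$, namely to $x_{j,2}$; being increasing, each limit satisfies $x_{j,1}=x_j^{(0)}<x_{j,2}$, which is the desired conclusion of \emph{(i)}. Statement \emph{(ii)} follows by the symmetric argument (reflect, or run the method releasing charges from the right), exchanging the roles of the two fields and the direction of every inequality.

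The only genuine subtlety worth flagging — and the step I would expect to be the main obstacle — is the bookkeeping of which interval each charge sits in throughout the iteration. In the single-interval proof the charges are totally ordered and stay so automatically; here a charge near $b_1$ must not "jump the gap" into $(c,d)$, and conversely the lowest charge in $(c,d)$ must not drift back across the gap. This is controlled by the structure of method \ref{Alg}, which moves each charge only within its own equilibrium subinterval $(x_{j}+h,x_{j+1}-h)$ (or the relevant boundary interval), and these subintervals are defined so that the charges initially assigned to $(a,b)$ remain confined to $(a,b)$ and likewise for $(c,d)$; the G-convexity hypothesis, applied separately on each component per Definition \ref{DefGconvex2}, ensures the relevant force functions are monotone with the correct sign of the endpoint limits on each piece. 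Once one observes that the partition of charges between the two intervals is preserved by the method, the rest of the argument is identical to the one-interval case and requires no new computation.
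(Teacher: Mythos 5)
Your proposal is correct and follows exactly the route the paper intends: the paper states this corollary without a separate proof, implicitly deferring to the argument of Corollary \ref{CorMonoCeroshpos} (initialize Method \ref{Alg} at the $\phi_1$-equilibrium, observe the total force there equals $-F_{\phi_1}+F_{\phi_2}>0$, invoke the rightward ratchet and convergence via the two-interval Theorem \ref{TeoMinEnergy}), and your write-up reproduces that argument, additionally making explicit the only delicate point (charges not crossing the gap), which the paper leaves as one of the ``small details.''
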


%

\section{Application to orthogonal polynomials in the Askey scheme}\label{Sec3}

If a polynomial $p_n$ of degree $n$ is orthogonal with respect to a positive discrete measure supported in $S=\{\xi_0, \xi_1,\dots \xi_N\}$ with $\xi_j=\xi_0+jh$ and $N\in\mathbb{N}\cup\{+\infty\}$, then its roots are known to be real and simple, furthermore, in each interval $(\xi_j,\xi_{j+1})$ there is at most one of them. The following lemma, which is a part of theorem 1 in \cite{KrZa09}, also guaranties that their mutual distances are always greater than $h$.

\begin{lm}\label{LemaOPhPos} If a polynomial $p_n(x)=(x-x_1)\dots(x-x_n)$ satisfies the following properties
\begin{itemize}
\item $p_n$ is orthogonal with respect to a positive measure, $\mu$, supported in $\Sigma=\{\xi_0, \xi_1,\dots \xi_N\}$ with $\xi_j=\xi_0+jh$ and $N\in\mathbb{N}\cup\{+\infty\}$ and $n<N+1$,
\item $p_n$ is a solution of the difference equation \eqref{ED} with $A$ and $B$ two polynomials such that $1+B/A>0$ in $(\xi_0,\xi_N)$,
\end{itemize}
then $|x_j-x_k|>h$ for all $j\neq k$.

As consequence the roots of $p_n$ correspond to a critical distribution in the external field $\phi$ such that
$$F_\phi(x)=\frac{1}{2h}\log\left(1+\frac{B(x)}{A(x)}\right)\,,\qquad x\in(\xi_0,\xi_N)\,.$$
\end{lm}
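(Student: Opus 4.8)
The plan is to read a sign relation off the difference equation \eqref{ED} at each root and then combine it with the orthogonality fact, quoted above, that each grid interval $(\xi_i,\xi_{i+1})$ contains at most one root. By the standard theory all the $x_j$ lie in $(\xi_0,\xi_N)$, where $1+B/A>0$; since a sign-changing zero of $A$ in this interval would force $1+B/A=(A+B)/A$ to take negative values nearby, $A$ cannot vanish at any $x_j$. Evaluating \eqref{ED} at a root $x_j$, where $p(x_j)=0$ so that $\Delta_h\nabla_h p(x_j)=p(x_j+h)+p(x_j-h)$ and $\Delta_h p(x_j)=p(x_j+h)$, gives $A(x_j)p(x_j-h)+(A+B)(x_j)p(x_j+h)=0$, that is
$$\frac{p(x_j-h)}{p(x_j+h)}=-\Big(1+\frac{B(x_j)}{A(x_j)}\Big)<0\,.$$
Thus $p(x_j-h)$ and $p(x_j+h)$ are nonzero with opposite signs. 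Two consequences are immediate: no root lies at distance exactly $h$ from $x_j$, and, since $p$ has only simple real roots, the open interval $(x_j-h,x_j+h)$ contains an \emph{odd} number of roots.

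Next I would bring in the orthogonality bound. The roots lie strictly inside the grid intervals and each $(\xi_i,\xi_{i+1})$ contains at most one of them. If $x_j\in(\xi_i,\xi_{i+1})$ then $x_j+h\in(\xi_{i+1},\xi_{i+2})$, so $(x_j,x_j+h)$ meets only the tail of $(\xi_i,\xi_{i+1})$, whose sole possible root is $x_j$ and hence contributes nothing, and the head of $(\xi_{i+1},\xi_{i+2})$; therefore $(x_j,x_j+h)$ contains at most one root. The symmetric statement holds for $(x_j-h,x_j)$.

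With these two ingredients the separation follows by a left-to-right induction proving $x_{k+1}>x_k+h$ for $k=1,\dots,n-1$. For the leftmost root $x_1$ there are no roots in $(x_1-h,x_1)$, so the odd number of roots in $(x_1-h,x_1+h)$ equals $1+\#\{\text{roots in }(x_1,x_1+h)\}$; being odd and, by the grid bound, at most $2$, it must equal $1$, whence $(x_1,x_1+h)$ is empty and $x_2>x_1+h$. For the inductive step, once $x_k>x_{k-1}+h$ there is no root in $(x_k-h,x_k)$, and the same parity-plus-grid count forces $(x_k,x_k+h)$ to be empty, giving $x_{k+1}>x_k+h$. Since every consecutive gap exceeds $h$, we get $|x_j-x_k|>h$ for all $j\neq k$. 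The final assertion is then immediate: the roots are simple, lie in $(\xi_0,\xi_N)\subset(a,b)$, and have mutual distances greater than $h$, so Theorem \ref{TeoEDElechPos}(ii) identifies them with a critical distribution under $\phi$.

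The main obstacle is the middle step. The difference equation alone yields only that an \emph{even} number of additional roots sit within distance $h$ of each $x_j$, so by itself it cannot exclude two nearby roots; the orthogonality bound of one root per grid interval is exactly what upgrades ``even'' to ``zero'', and threading the two facts together through the monotone induction is the crux. Minor technical points to dispatch are that the roots avoid the mass points $\xi_i$ and that $A(x_j)\neq 0$, both of which follow from the stated hypotheses.
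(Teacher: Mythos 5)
The paper offers no proof of this lemma; it is imported as part of Theorem 1 of \cite{KrZa09}, so there is nothing in-house to compare against and your argument has to stand on its own. Its skeleton is the natural one (and, as far as I can tell, essentially that of the cited source): evaluate \eqref{ED} at a root to get $(A+B)(x_j)p(x_j+h)+A(x_j)p(x_j-h)=0$, conclude $p(x_j-h)p(x_j+h)<0$ so that $(x_j-h,x_j+h)$ holds an odd number of roots, cap that number using the separation of zeros by mass points, and induct from left to right. Two steps need repair. The lesser one is the division by $p(x_j+h)$: a priori $p(x_j+h)$ and $p(x_j-h)$ could both vanish. This is excluded by iterating the identity at the root $x_j-h$, which would force $p(x_j-2h)=0$, then $p(x_j-3h)=0$, and so on, giving infinitely many roots; alternatively, your left-to-right induction already guarantees $p(x_k-h)\neq0$ at the moment each $x_k$ is treated, but the blanket assertion ``nonzero with opposite signs'' for an arbitrary $x_j$ is not justified as written.

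The more serious issue is the claim that the roots ``lie strictly inside the grid intervals'' and that their avoidance of the mass points ``follows from the stated hypotheses''. It does not, and it is false: $K_2(x;1/2,4)=(x-1)(x-3)/3$ satisfies every hypothesis of the lemma and both of its roots are mass points. Your bound ``at most one root in $(x_j,x_j+h)$'' leans on this, because with only ``at most one zero per open cell $(\xi_i,\xi_{i+1})$'' you cannot exclude one root sitting exactly at $\xi_{i+1}$ and a second one in $(\xi_{i+1},x_j+h)$, which would put three roots in $(x_j-h,x_j+h)$ --- an odd number, so the parity argument does not rescue you. The fix is to invoke the separation theorem in its sharp, standard form: strictly between any two distinct zeros of a polynomial orthogonal with respect to a positive discrete measure there lies at least one mass point. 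Then two roots $y_1<y_2$ in $(x_j,x_j+h)$ would, together with $x_j$, force two distinct mass points into the open interval $(x_j,x_j+h)$ of length $h$, which is impossible since consecutive mass points are exactly $h$ apart. With that substitution (and no assumption at all on where the roots sit relative to the grid) your induction closes correctly, and the final appeal to Theorem \ref{TeoEDElechPos}(ii) is exactly right.
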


The following extension of Lemma \ref{LemaOPhPos} can be proved with the same arguments that appear in the proof of Theorem 1 in \cite{KrZa09} but adapted to the new scenario. It provides separation properties between the roots of orthogonal polynomials when they present some differential properties but in quadratic lattices.
  
\begin{lm}\label{LemaOPhPos2} Assume that $\lambda(x)=x(x+\delta)$ with $\delta\geq 1/2$, and $n<N\leq +\infty$. Let $p_n$ be a polynomial of degree $n$ such that:
\begin{itemize}
\item $p_n$ is orthogonal with respect to a positive measure, $\mu$, supported in $\Sigma=\{\lambda(\xi_0), \lambda(\xi_1),\dots \lambda(\xi_N)\}$,
    with $\xi_0\geq 0$ and $\xi_{j+1}=\xi_j+h$, $j=0,\dots N-1$.
\item $p_n(\lambda)$ is a solution of a difference equation of type \eqref{ED} with $A$ and $B$ two polynomials with $1+B/A>0$ in $(\xi_0,\xi_N)\cup(-\xi_N-\delta,-\xi_0-\delta)$.
\end{itemize}
then if $x_1,\dots,x_{2n}\in(\xi_0,\xi_N)\cup(-\xi_N-\delta,-\xi_0-\delta)$ are the $2n$ roots of $p_n(\lambda)$ then $|x_j-x_k|>h$ for all $j\neq k$.

As consequence the roots of $p_n(\lambda)$ correspond to a critical distribution in the external field $\phi$ such that
$$F_\phi(x)=\frac{1}{2h}\log\left(1+\frac{B(x)}{A(x)}\right)\,,\qquad x\in(\xi_0,\xi_N)\cup(-\xi_N-\delta,-\xi_0-\delta)\,.$$
\end{lm}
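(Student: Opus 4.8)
The plan is to reduce the statement to the single separation estimate $|x_j-x_k|>h$; granting this, the assertion about the critical distribution is immediate from Theorem \ref{TeoEDElechPos}, which (as remarked after Definition \ref{DefGconvex2}) is valid in the two-interval setting. To organise the roots, write $z_1<\dots<z_n$ for the real simple roots of $p_n$ in the variable $z=\lambda(x)$. Since $\lambda(-x-\delta)=\lambda(x)$, each $z_i$ produces exactly two roots of $y(x):=p_n(\lambda(x))$: one point $u_i\in(\xi_0,\xi_N)$ on the increasing branch and its reflection $v_i=-u_i-\delta\in(-\xi_N-\delta,-\xi_0-\delta)$ on the decreasing branch. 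As $u_i$ grows with $z_i$, the $2n$ roots are ordered $v_n<\dots<v_1<u_1<\dots<u_n$, the map $x\mapsto-x-\delta$ interchanging the two branches and the two lattices $\{\xi_j\}$ and $\{-\xi_j-\delta\}$; the hypothesis $\xi_0\ge0$ places the branches on opposite sides of $-\delta/2$ and makes $\lambda$ strictly monotone on each interval, so all $2n$ roots are simple.

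I would then assemble two ingredients. The first is the discrete-orthogonality input used exactly as in the proof of Theorem 1 of \cite{KrZa09} and of Lemma \ref{LemaOPhPos}: testing $p_n$ against $p_n(z)/[(z-z_i)(z-z_{i+1})]$ shows that each gap $(\lambda(\xi_j),\lambda(\xi_{j+1}))$ contains at most one $z$-root, which through the two monotone branches becomes the statement that each lattice cell of width $h$, on either of the intervals $(\xi_0,\xi_N)$ and $(-\xi_N-\delta,-\xi_0-\delta)$, contains at most one root of $y$. The second is the sign condition coming from \eqref{ED}: evaluating the equation at a root $x_j$ of $y$ gives, just as in the proof of Theorem \ref{TeoEDElechPos},
$$-\frac{y(x_j-h)}{y(x_j+h)}=1+\frac{B(x_j)}{A(x_j)}>0,$$
so $y(x_j-h)$ and $y(x_j+h)$ have opposite signs. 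In particular $x_j\pm h$ are not roots, and if $y$ has no root in $(x_j-h,x_j)$ then $y(x_j+h)$ and $y(x_j^+)$ share their sign, i.e.\ the number of roots of $y$ in $(x_j,x_j+h)$ is even.

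The proof is then a left-to-right induction along $v_n<\dots<v_1<u_1<\dots<u_n$ showing that each gap to the successor exceeds $h$. At the leftmost root $v_n$ there is nothing to the left, so the sign condition forces an even number of roots in $(v_n,v_n+h)$; by the cell bound this number is at most one, hence zero, giving $v_{n-1}>v_n+h$. The step repeats verbatim as long as a root and its successor lie on the same branch, because each interval is then a genuine step-$h$ lattice. The one new difficulty, and the main obstacle, is the crossing of the central gap $(-\xi_0-\delta,\xi_0)$, that is, separating the closest cross-branch pair $v_1,u_1$: their geometric distance $u_1-v_1=2u_1+\delta$ may be smaller than $h$ when $u_1$ is near $\xi_0$, so cell-counting alone is useless there. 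I resolve this again through the sign condition: the induction guarantees no root in $(v_1-h,v_1)$, so the number of roots in $(v_1,v_1+h)$ is even; the only possible such root is $u_1$, since any $u_i$ with $i\ge2$ satisfies $u_i>\xi_0+h\ge h>v_1+h$ (using $\xi_0\ge0$ and $v_1<0$) and the central gap itself is root-free; hence that number is zero and $u_1>v_1+h$. It is thus the difference equation, not the geometry, that pushes the two intervals apart.

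Once every consecutive gap exceeds $h$, so does every pairwise distance, which is the desired estimate $|x_j-x_k|>h$. Substituting this into Theorem \ref{TeoEDElechPos} (two-interval version) identifies the $2n$ roots of $p_n(\lambda)$ with the critical distribution in the external field $\phi$, completing the argument.
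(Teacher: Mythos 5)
Your proof is correct and follows exactly the route the paper points to: the paper gives no actual proof of Lemma \ref{LemaOPhPos2}, merely asserting that it follows by adapting the argument of Theorem 1 of \cite{KrZa09}, i.e.\ the combination of the sign relation $-y(x_j-h)/y(x_j+h)=1+B(x_j)/A(x_j)>0$ obtained by evaluating \eqref{ED} at a root with the at-most-one-zero-per-lattice-cell property coming from discrete orthogonality, and your left-to-right parity induction is precisely that adaptation. The one genuinely new step the adaptation requires --- separating the cross-branch pair $v_1,u_1$, where cell-counting alone fails --- you handle correctly via the evenness argument; it is worth noting that your proof never invokes the hypothesis $\delta\ge 1/2$, which is not an error on your part (the positivity of $1+B/A$ already forces the sign alternation at $u_1$) but suggests that hypothesis is either a safety margin or needed elsewhere in the paper's framework.
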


The Askey scheme (a part) is the natural context in which this electrostatic model can be applied as we see in the following subsections (see for instance \cite{KW} for data about hypergeometric representations, orthogonality and difference equations). When we deal with the uniform lattice ($\lambda(x)=x$), polynomials $A$ and $B$ have degree at most $2$ and $1$ respectively, hence $C$ is a constant. The weight function $w$ satisfies the Pearson-type equation
$$\Delta_h(A w)=Bw\,,\qquad \Rightarrow\qquad w(\xi_j)=\prod_{k=1}^{j}\frac{(A+B)(\xi_{j-1})}{A(\xi_j)}=\prod_{k=1}^{j}\frac{(A+B)(\xi_0+(j-1)h)}{A(\xi_0+jh)}\,,$$
and the polynomials solutions, $p$, of the difference equations \eqref{ED} satisfies the orthogonality relations
$$\sum_{j=0}^N p(\xi_j)\xi_j^k w(\xi_j)=0\,,\qquad k=0,\dots,\deg(p)-1\,,$$
where $\xi_0$ is a root of $A$ and $N$ is such that $\xi_N$ is a root of $A+B$ if it is possible or $N=+\infty$ in other case (assuming the convergence of the series).

\subsection{Charlier polynomials}
Charlier polynomials $C_n(x;a)$ are the polynomial solutions of the difference equations
$$x\Delta_1\nabla_1 y+(a-x)\Delta_1 y+n y=0\,,$$
so comparing with \eqref{ED} polynomials $A$ and $B$ are defined as
\begin{align*}
A(x)=x\,,\qquad B(x)=a-x\,.
\end{align*}
They can be represented explicitly via the hypergeometric function
$$C_n(x;a)=\hypergeometric{2}{0}{-n,-x}{-}{\frac{-1}{a}}
=\sum_{k=0}^n\frac{(-n)_k(-x)_k}{k!}\left(\frac{-1}{a}\right)^k$$
where $(a)_k=a(a+1)\dots(a+k-1)$ denotes the Pochhammer symbol. The weight function $w$ can be chosen as follows
$$w(x)=\frac{a^x}{\Gamma(x+1)}\,,$$
so the polynomials $\{C_n(x;a)\}_{n\in\mathbb{N}}$ are orthogonal with respect to the measure
$$\sum_{j=0}^\infty\frac{a^j}{j!}\delta_j\,,$$
where $\delta_x$ denotes the Dirac delta at the point $x$.  We consider only the case of a positive orthogonality measure, i.e. $a>0$, which implies that the roots are simple and located in $(0,+\infty)$. In virtue of lemma \ref{LemaOPhPos}, the roots  $x_{1,n}^{C}<x_{2,n}^{C}<\dots<x_{n,n}^{C}$ of $C_n(x;a)$ are such that $x_{j+1}^{C}-x_j^{C}>1$. Observe that just for the case $a>0$, we can consider that there is an external field $\phi^{C}$ with
$$F_{\phi^{C}}(x)=-(\phi^{C})'(x)
=\frac{1}{2}\log\left(1+\frac{B(x)}{A(x)}\right)
=\frac{1}{2}\log\left(\frac{a}{x}\right)\,,\qquad  x\in(0,+\infty)\,.$$
This is a G-convex external field so $\{x_{j,n}^{C}\}_{j=1}^n$ is the equilibrium distribution under $\phi^{C}$. Moreover, corollary \ref{CorMonoCeroshpos} states that $x_{j,n}^{C}$ is an increasing function of the parameter $a$ (which was already known, see for instance \cite{KCFRAS2020}).

%
%
%
%
%

\subsection{Krawtchouk polynomials}
Krawtchouk polynomial $K_n(x;p,N)$, $N\in\mathbb{N}$, is  the polynomial solution of the difference equation
$$A(x)\Delta_1\nabla_1 y+B(x)\Delta_1 y+n y=0\,,\qquad A(x)=(1-p)x\,,\quad B(x)=pN-x\,,$$
which implies that it has a hypergeometric representation as follows
$$K_n(x;p,N)=\hypergeometric{2}{1}{-n,-x}{-N}{\frac{1}{p}}
=\sum_{k=0}^n\frac{(-n)_k(-x)_k}{k!(-N)_k}\left(\frac{1}{p}\right)^k\,,\qquad n=0,1,\dots,N+1\,.$$
The weight function can be considered as
$$w(x)=\frac{\Gamma(N+1)}{\Gamma(x+1)\Gamma(N+1-x)}p^x(1-p)^{N-x}\,,$$
and hence the family $\{K_n(x;p,N): n=0,\dots,N+1\}$ is a sequence of orthogonal polynomials with respect to the measure
$$\sum_{j=0}^N\binom{N}{j}p^j(1-p)^{N-j}\delta_{j}\,,$$
which is a positive measure for $p\in(0,1)$, we only consider this case. Hence the roots of $K_n(x;p,N)$ are simple, contained in $(0,N)$ and their mutual distances are greater than $1$ (except for $n=N+1$, in which case the roots are $\{0,1,\dots,N\}$).

The external field $\phi^{K}$ is such that
$$F_\phi^{K}(x)=-(\phi^{K})'(x)=\frac{1}{2}\log\left(1+\frac{pN-x}{(1-p)x}\right)
=\frac{1}{2}\log\left(\frac{p(N-x)}{(1-p)x}\right)\,,$$
 thus, it is a G-convex external field in $(0,N)$. Therefore, the roots of $K_n(x;p,N)$, $x_{1,n}^{K}<x_{2,n}^{K}<\dots<x_{n,n}^{K}$ with $n\leq N$, correspond with the equilibrium distribution in $(0,N)$ under $\phi^{K}$. Since the quotient $(A+B)/A$ increases with $p$ and $N$, the roots of $x_{j,n}^{K}$ also increases as $p$ or $N$ increases (the monotonicity with respect to $p$ was previously known, see for instance \cite{KCMSFR2021}).

The roots of Krawtchouk and Charlier polynomials can be compared using corollary \ref{CorMonoCeroshpos}. If $a>Np/(1-p)$, then the  force of the external field associated with Charlier polynomials is greater than the force associated with the Krawtchouk case (in the sense that $F_{\phi^C}>F_{\phi^{K}}$) and hence, the roots of Charlier polynomials are greater than those of  Krawtchouk polynomials
\begin{equation}\label{CompCK}
x_{j,n}^{K}<x_{j,n}^{C}\,,\qquad j=1,\dots, n\,,\quad n\leq N\,.
\end{equation}

\subsection{Meixner polynomials} 

Meixner polynomials $M_n(x;\beta,c)$ are defined as the unique polynomial solution of the difference equation
$$A(x)\Delta_1\nabla_1 y+B(x)\Delta_1 y+n y=0\,,\qquad A(x)=x\,,\quad B(x)=c(x+\beta)-x\,,$$
which implies that the following hypergeometric representation holds
$$M_n(x;\beta,c)=\hypergeometric{2}{1}{-n,-x}{\beta}{1-\frac{1}{c}}
=\sum_{k=0}^\infty\frac{(-n)_k(-x)_k}{k!(\beta)_k}\left(1-\frac{1}{c}\right)^k\,.$$
The weight function is usually taken as
$$w(x)=\frac{\Gamma(\beta+x)c^x}{\Gamma(\beta)\Gamma(x+1)}\,,$$
and therefore, these polynomials are orthogonal with respect to the measure
$$\sum_{j=0}^\infty \frac{(\beta)_jc^j}{j!}\delta_j\,,$$
when $c\in(-1,1)$. This measure is positive when $\beta>0$ and $c\in(0,1)$, which are typically called the standard parameters. For these parameters, the roots of $M_n$ are simple, located in $(0,+\infty)$, and in virtue of lemma \ref{LemaOPhPos}, their mutual distances are greater than $1$.

The external field $\phi^M$ is such that
\begin{equation}\label{ExtFMeixner}
F_{\phi^{M}}(x)=-(\phi^{M})'(x)=\frac{1}{2}\log\left(\frac{c(x+\beta)}{x}\right)\,,\qquad x>0\,,
\end{equation}
so it can be considered as a constant force pushing to the left (with value $1/2\log(c)$) plus the force field created by a charge located at $-\beta/2$ of size $\beta/2$ and exclusion radius $\beta/2$. It is a G-convex external field in $(0,+\infty)$, so the roots $x_{1,n}^{M}<x_{2,n}^{M}<\dots<x_{n,n}^{M}$ of $M_n(x;\beta,c)$ are the equilibrium distribution under the external field $\phi^{M}$. Corollary \ref{CorMonoCeroshpos} proves that $x_{j,n}^{M}$ increases when the parameters $\beta$ or $c$ increase (the behaviour with respect to $\beta$ can be found for instance in \cite[p. 214]{IsmailBook2005} or in \cite{KCMSFR2021}).

When we compare the roots of Charlier $C_n(x;a)$ and Meixner polynomials $M_n(x;\beta,c)$ we see that when $a<c\beta$, we have $F_{\phi^{C}}<F_{\phi_M}$ in $(0,+\infty)$, so corollary \ref{CorMonoCeroshpos} implies
\begin{equation}\label{CompCM}
x_{j,n}^{C}<x_{j,n}^{M}\,,\qquad j=1,\dots,n\,,\quad n\in\mathbb{N}\,.
\end{equation}
Relations \eqref{CompCK} and \eqref{CompCM} can be combined to obtain
$$x_{j,n}^{K}<x_{j,n}^{C}<x_{j,n}^{M}\,,\qquad j=1,\dots,n\,,\quad n\leq N\,,$$
for the roots of $K_{n}(x;p,N)$ when $c\beta>Np(1-p)\,.$

Meixner polynomials can be considered also for some nonstandard parameters and in some of these cases, the results of section \ref{Sec2} can also be applied. This can be done in the following cases:
\begin{itemize}
\item Case $\beta>0$ and $c>1$. An orthogonality for Meixner polynomials with these parameters is not known; however, we can describe its roots. The external field \eqref{ExtFMeixner} is G-convex in $(-\infty,-\beta)$ and thus, for any $n>0$, there exists an unique equilibrium distribution in $(-\infty,-\beta)$, given by $x_1<x_2<\dots<x_n<-\beta$, with $x_{j+1}>x_j+1$. Using theorem \ref{TeoEDElechPos} we deduce that there is a polynomial $C$ of degree 0 such that $p(x)=(x-x_1)\dots(x-x_n)$ is a solution of the difference equation
$$A(x)\Delta_1\nabla_1 y(x)+B(x)\Delta_1 y(x)+C(x)y(x)=0\,.$$
By comparing the asymptotic term at $\infty$ in $x^{n}$ we deduce that $C(x)=n$, so indeed $p(x)=M_n(x;\beta,c)$. As a consequence, even with the absence of an orthogonality, we can deduce that the roots are located in $(-\infty,-\beta)$, that they are simple, that their mutual distances are greater than $1$ and that they are decreasing functions of $\beta$ and increasing  functions of $c$. Moreover, corollary \ref{CorEntrelazamiento} implies that the roots of $M_n(x;\beta,c)$ and $M_{n-1}(x;\beta,c)$ also interlace.

\item Case $c,\beta<0$. If $\beta$ is an integer, these polynomials coincide with the Krawtchouk polynomials $K_n(x;-\beta,c/(c-1))$, so the behaviour of its roots has been described in the previous section. For $\beta$ not an integer, an orthogonality for these parameters was obtained in \cite{CostasSLara2009}, but it is a non-hermitian complex orthogonality through a complex contour, so it does not explain how the roots behave. However, since the external field \eqref{ExtFMeixner} is G-convex in $(0,-\beta)$, the same arguments that in the previous case can be used, and hence, the roots of $M_n(x;\beta,c)$ are in $(0,-\beta)$, are simple and their mutual distances are greater than $1$.  Moreover, the roots of $M_n(x;\beta,c)$ and $M_{n-1}(x;\beta,c)$ also interlace and they decrease as $\beta$ or $c$ increases.
\end{itemize}

\subsection{Hahn polynomials}

Hahn polynomials $Q_n(x;\alpha,\beta,N)$ are the polynomial solutions of the difference equations $$A(x)\Delta_1\nabla_1 y+B(x)\Delta_1 y+n y=0\,,$$
with
$$A(x)=x(x-\beta-N-1)\,,\quad B(x)=(\alpha+\beta+2)x-(\alpha+1)N\,,$$
and $N\in\mathbb{N}$, so they have the hypergeometric representation
$$Q_n(x;\alpha,\beta,N)=\hypergeometric{3}{2}{-n,n+\alpha+\beta+1,-x}{\alpha+1,-N}{1}
=\sum_{k=0}^\infty\frac{(-n)_k(n+\alpha+\beta+1)_k(-x)_k}{k!(\alpha+1)_k(-N)_k}\,.$$
The support of the orthogonality measure is chosen as $\xi_j=j$ with $j=0,\dots,N$ and the weight function is
$$\rho(x)=\binom{\alpha+x}{x}\binom{\beta+N-x}{N-x}\,,$$
therefore $Q_n(x;\alpha,\beta,N)$, $n\leq N+1$, is orthogonal with respect to the measure
\begin{equation}\label{OrthMeasHahn}
\sum_{j=0}^N\binom{\alpha+j}{j}\binom{\beta+N-j}{N-j}\delta_j\,.
\end{equation}
It is a positive measure when  $\alpha,\beta\in(-1,+\infty)$ and also when $\alpha,\beta\in(-\infty,-N)$ (if we multiply it by $(-1)^N$), so in these cases, the roots of $Q_n$ are simple and belong to $(0,N)$. However, the quotient $(A+B)/A$ is positive in $(0,N)$ only for $\alpha,\beta\in(-1,+\infty)$ or $\alpha,\beta\in(-\infty,-N-1)$ so by direct application of lemma \ref{LemaOPhPos}, only in these cases we can guarantee that the mutual distances between the roots are greater than $1$.

In a first scenario we consider only the case $\alpha,\beta\in(-1,+\infty)$. The external field $\phi^{Q}$ is such that
$$F_{\phi^{Q}}(x)=-(\phi^{Q})'(x)=\frac{1}{2}\log\left(\frac{(x+\alpha+1)(N-x)}{x(N+\beta+1-x)}\right)\,.$$
This external field is created by two charges: one of them at $-(\alpha+1)/2$ with size $(\alpha+1)/2$ and exclusion radius $(\alpha+1)/2$ and the other at $(\beta+1)/2+N$ with size $(\beta+1)/2$ and exclusion radius $(\beta+1)/2$.
This is a G-convex external field in $(0,N)$, so the roots of $Q_n$, $x_{1,n}^{Q}<x_{2,n}^{Q}<\dots<x_{n,n}^Q$, correspond with the equilibrium distribution. 

Since $(A+B)/A$ on the interval $(0,N)$ is an increasing function in $\alpha$ and $N$ and a decreasing function in $\beta$, the roots $x_{j,n}^{Q}$ increases with $\alpha$ and $N$ and decreases with $\beta$ (the monotonicity with respect to $\alpha$ and $\beta$ can be found for instance in \cite[Theorem 7.1.2]{IsmailBook2005} or \cite{KCFRAS2020}).

If we compare its roots with those of Krawtchouk, we see that when $p/(1-p)>(\alpha+N+1)/(\beta+1)$ then $F_{\phi^Q}<F_{\phi^K}$ and then
$$x_{j,n}^{Q}<x_{j,n}^{K}\,,\qquad j=1,\dots,n\,,\quad n\leq N\,;$$
but when $p/(1-p)<(\alpha+1)/(N+\beta+1)$ then $F_{\phi^Q}>F_{\phi^K}$ and then
$$x_{j,n}^{Q}>x_{j,n}^{K}\,,\qquad j=1,\dots,n\,,\quad n\leq N\,.$$

In table \ref{TabHahn} all the possible scenarios in which $F_{\phi^Q}$ is a G-convex external field are listed, including the monotonicity of the roots with respect to the parameters. In cases with nonstandard parameters, the same arguments that we have seen for Meixner polynomials also hold, i.e., the external field is G-convex in the given interval and hence, for $n$ less than the ceil of length of the interval, there is a polynomial solution of a  difference equation
$$A\Delta_1\nabla_1 y+B\Delta_1 y+C y=0\,,$$
with $C$ some constant. Evaluating this equation in the polynomial solution and looking at the leading term at infinity, one can deduce that $C=n$. This proves that indeed, the polynomial solution of this difference equation is a Hahn polynomial with nonstandard parameters. Observe that even the case of $N$ a real number is considered, although it would be more natural to refer to these polynomials as Continuous Hahn polynomials (with complex parameters). The eight cases are deduced from the four combinations
$$r_1<p_1<r_2<p_2\,,\qquad r_1,r_2 \text{ roots of }\frac{A+B}{A}\qquad \text{and}\qquad p_1,p_2 \text{ poles of }\frac{A+B}{A}$$
together with the other four combinations 
$$p_1\leq p_2<r_1 \leq p_2\,,$$
and the fact that in all the combinations, the central interval ($(p_1,r_2)$ or $(p_2,r_1)$) is the only one where the external field can be G-convex.

\begin{table}
\begin{center}
\begin{tabular}{|c|c|c|c|}
\hline
 Case & Interval& Increasing& Decreasing\\
\hline
$N>0$,\quad $\alpha>-1$,\quad $\beta>-1$& $(0,N)$ & $\alpha$, \quad $N$& $\beta$\\
\hline
$N>0$,\quad $\alpha\leq -N-1$,\quad $\beta\leq -N-1$ & $(0,N)$ &$\beta$,\quad $N$ & $\alpha$\\
\hline
$N>0$,\quad $-N-1\leq \alpha<-1$, \quad $\beta\leq -N-1$& $(0,-\alpha-1)$ &$\beta$,\quad $N$ & $\alpha$\\
\hline
$N>0$,\quad  $\alpha \leq -N-1$,\quad $-N-1\leq\beta<-1$ & $(N+\beta+1,N)$ & $\beta$,\quad $N$ & $\alpha$\\
\hline
$N>0$,\quad $\alpha\geq -N-1$,\quad $\beta\geq-N-1$,\quad $\alpha+\beta<-N-2$ &  $(N+\beta+1,-\alpha-1)$ & $\beta$,\quad $N$ & $\alpha$ \\
\hline
$N<0$,\quad $\alpha< -1$,\quad $\alpha+\beta>-N-2$ & $(0,-\alpha-1)$ & - &$\alpha$,\quad $\beta$,\quad $N$\\
\hline
$N<0$,\quad  $\alpha+\beta>-N-2$,\quad $\beta<-1$ & $(N+\beta+1,N)$ & $\alpha$,\quad $\beta$,\quad $N$ & -\\
\hline 
$N<0$,\quad $\alpha>-1$,\quad $\beta>-1$,\quad $\alpha+\beta<-N-2$ & $(N+\beta+1,-\alpha-1)$ & $\beta$,\quad $N$ & $\alpha$ \\
\hline
\end{tabular}
\end{center}
\caption{All possible combinations of parameters for which the external field is G-convex. Third and fourth columns refer to the monotonicity of the roots of Hahn polynomials; if a parameter appears in the third one, it means that the roots are increasing functions of this parameter and analogously for the information in the fourth column.} \label{TabHahn}
\end{table}  

Incidentally, table \ref{TabHahn} gives some shaper bounds for the first or the last root of $Q_n(x;\alpha,\beta,N)$ where the orthogonality measure is positive, $\alpha,\beta\leq-N$ and  $n\leq\lceil\min\{N,-\alpha-1\}-\max\{0,N+\beta+1\}\rceil$. The roots of $Q_n$ were known to be in $(0,N)$ by the orthogonality, but if in addition $\alpha\in[-N-1,-N)$ then the greatest root of $Q_n$ is less than $-\alpha-1$ (see third and fifth rows in table \ref{TabHahn}). Similarly, if $\beta\in[-N-1,-N)$ then the least root of $Q_n$ is greater than $N+\beta+1$ (see fourth and fifth rows in table \ref{TabHahn}).

\subsection{Dual Hahn polynomials}

Dual Hahn polynomials $R_n(x;\gamma,\delta,N)$ with $N\in\mathbb{N}$ are defined in a similar way as the preceding ones but using the quadratic lattice $\lambda(x)=x(x+\gamma+\delta+1)$. This means that $R_n(\lambda(x);\gamma,\delta,N)$ is the polynomial (instead of $R_n(x;\gamma,\delta,N)$) which satisfies the differential properties expressed through the $\Delta_1$ and $\nabla_1$ operators. Then $R_n(\lambda(x);\gamma,\delta,N)$ is defined as the polynomial solution of
\begin{equation}\label{EDDualHahn}
A(x)\Delta_1\nabla_1 y(x)+B(x)\Delta_1 y(x)+C y(x)=0\,,
\end{equation}
with
\begin{align*}
&A(x)=x(x+\gamma+\delta+N+1)(x+\delta)(2x+\gamma+\delta+2)\,,\\
&A(x)+B(x)=(x+\gamma+1)(x+\gamma+\delta+1)(N-x)(2x+\gamma+\delta)\,,\\
&C(x)=n(2x+\gamma+\delta)(2x+\gamma+\delta+1)(2x+\gamma+\delta+2)\,.
\end{align*}
Hence it has the hypergeometric representation
$$R_n(\lambda(x);\gamma,\delta,N)=\hypergeometric{3}{2}{-n,-x,x+\gamma+\delta+1}{\gamma+1,-N}{1}\,.$$
The orthogonality measure is supported on the points $\xi_j=\lambda(j)$, $j=0,\dots,N$, thus when $n\leq N+1$, we have that $R_n(x;\gamma,\delta,N)$ is the orthogonal polynomial with respect to the measure
$$\sum_{j=0}^N\frac{(-1)^j(-N)_j (\gamma+1)_j(2j+\gamma+\delta+1)}{j!(j+\gamma+\delta+1)_{N+1}(\delta+1)_j}\delta_{\xi_j}\,.$$
This is a positive measure when $\gamma,\delta\in(-1,+\infty)$ and also when $\gamma,\delta\in(-\infty,-N)$. However, to clarify the explanations, we restrict ourselves to the case $\gamma,\delta\in[0,+\infty)$ in the remainder of this section, although other cases would be analyzed with the same tools. Therefore, the roots of $R_n(x;\gamma,\delta,N)$ are simple, located in $(0,\lambda(N))$ and in each interval $(\xi_j,\xi_{j+1})$, $j=1,\dots,N-1$ there is at most one of them. Then, the $2n$ roots of $R_n(\lambda(x);\gamma,\delta,N)$ are real, simple, and $n$ of them are in $(0,N)$ while the others $n$ are in $(-N-\gamma-\delta-1,-\gamma-\delta-1)$. Indeed, they are distributed symmetrically considering $(-\gamma-\delta-1)/2$ as the center of symmetry. We denote them by 
$$
0<x_1<x_2<\dots< x_n<N\,,\qquad \text{and}\qquad -\gamma-\delta-1-N<x_{n+1}<x_{n+2}<\dots<x_{2n}<-\gamma-\delta-1\,,$$
 with $x_{n+j}=-\gamma-\delta-1-x_{n-j+1}$. Hence, the roots of $R_n(x;\gamma,\delta,N)$ are $0<\lambda(x_1)<\lambda(x_2)<\dots<\lambda(x_n)<N$.

Observe that the quotient $(A+B)/A$ is positive in $(0,N)\cup(-\gamma-\delta-N-1,-\gamma-\delta-1)$ so we can consider the external field as
$\phi^R$ given by
$$F_{\phi^R}(x)=-(\phi^R)'(x)=\frac{1}{2}\log\frac{A(x)+B(x)}{A(x)}\,,\qquad x\in(0,N)\cup(-\gamma-\delta-N-1,-\gamma-\delta-1)\,.$$
Indeed, the identities 
\begin{align*}
&\frac{A+B}{A}\left(\frac{-\gamma-\delta-1}{2}+x\right)
=\left(\frac{A+B}{A}\left(\frac{-\gamma-\delta-1}{2}-x\right)\right)^{-1}\\
\Rightarrow\quad & F_{\phi^R}\left(\frac{-\gamma-\delta-1}{2}+x\right)=-  F_{\phi^R}\left(\frac{-\gamma-\delta-1}{2}-x\right)\\
\Rightarrow\quad& \phi^R\left(\frac{-\gamma-\delta-1}{2}+x\right)= \phi^R\left(\frac{-\gamma-\delta-1}{2}-x\right)\,,
\end{align*}
prove that it is a symmetric external field with respect to the center $(-\gamma-\delta-1)/2$.

Lemma \ref{LemaOPhPos2} can be used to deduce that $|x_j-x_k|>1$ for $j\neq k$, both in $\{1,\dots,2n\}$.

The external field $\phi^R$ is  G-convex (see definition \ref{DefGconvex2}) in the union of both intervals because
\begin{equation}
\begin{array}{ll}A(0)=0&A(-\gamma-\delta-N-1)=0\,,\\
 (A+B)(N)=0\qquad &(A+B)(-\gamma-\delta-1)=0
 \end{array}\end{equation}
and the rest of poles and roots of $(A+B)/A$ are outside of these intervals (the decrease of $F_{\phi^R}$ is not obvious, but it can be proved with standard techniques). Therefore, the distribution $x_1<\dots,<x_n$  together with $x_{n+1}<\dots<x_{2n}$ corresponds with the unique $(n,n)$-equilibrium distribution under the external field $\phi^R$ in $(0,N)\cup(-N-\gamma-\delta-1,-\gamma-\delta-1)$.

Let us see that although the same techniques for proving the monotonicity of the roots that we have used in the preceding sections are not applicable directly, we can make some modifications to analyze some of the problems of the movement of the zeros in this scenario. We use as an example the behaviour of the roots as $N$ increases.

The force that the external field exerts, $F_\phi(x)$, is an increasing function of $N$ for $x\in(0,N)$, which implies that it is decreasing for  $x\in(-N-\gamma-\delta-1,-\gamma-\delta-1)$. Therefore corollary \ref{CorMonoCeroshpos2} can not be used, but this problem can be solved by using a different version of method \ref{Alg}. In each step, instead of set one charge free, now we move a pair of symmetric charges to its equilibrium position. More specifically, in the first step we move the symmetric charges $x_1$ and $x_{2n}$ to the minimum of the problem
$$\min \{\mathcal{E}_\phi(x,x_2,\dots,x_{2n-1},y): x\in(0,x_2-1), y\in(x_{2n-1}+1,-\gamma-\delta-1)\}\,.$$
This problem has a unique solution since the external field
$$\sum_{j=2}^{2n-1}V_{x_j}+\phi\,,$$
is G-convex in $(0,x_2-1)\cup(x_{2n-1}+1,-\gamma-\delta-1)$ which implies that there an unique $(1,1)$-equilibrium distribution and then this movement is well defined.
In a second step, we move $x_2$ and $x_{2n-1}$ to the solution of the problem
$$\min \{\mathcal{E}_\phi(x_1,x,x_3,\dots,x_{2n-2},y,x_{2n}): x\in(x_1+1,x_3-1), y\in(x_{2n-2}+1,x_{2n}-1)\}\,.$$
The method follows in same way for the pair $x_3,x_{2n-2}$ and so on until the pair $x_n,x_{n+1}$. Then it begins again with the pair $x_1,x_n$ and so on.

The proof of theorem \ref{TeoMinEnergy} is also valid for this modified method, so we can guarantee that with any symmetric (with respect to $(-\gamma-\delta-1)/2$) distribution of $2n$ points in $(0,N)\cup(-N-\gamma-\delta-1,-\gamma-\delta-1)$ as initial data, the sequence of distributions given by the method converges to the $(n,n)$-equilibrium distribution.

Now we can prove that the positive roots of $R_n$ are increasing functions of $N$ by using the modified method. Let $\gamma,\delta\geq 0$ and $N_1<N_2$, $i=1,2$, and denote by $x_j^{i}$ the roots of $R_n(\lambda(x);\gamma,\delta,N_i)$ and by $\phi^i$ the corresponding external fields. We take $x_1^{1},\dots,x_{2n}^1$ as the initial data of the method applied with $\phi^2$ (in order to not complicate notation, we omit the superindex for the points in the steps of the method). In the first step, $x_1$ moves to the right while $x_{2n}$ moves to the left, which can be justified taking into account that $x_1$ must move to the unique root of the equation (see \eqref{TotalForcehPos})
$$\sum_{j=2}^{2n-1}\frac{1}{2}\log\left(\frac{x-x_j+1}{x-x_j-1}\right)
+\frac{1}{2}\log\left(\frac{2x+\gamma+\delta+2}{2x+\gamma+\delta}\right)+F_{\phi^2}(x)=0\,,$$
in $(0,x_2-1)$ (where the left hand side is decreasing function) and that, before the movement,
$$\sum_{j=2}^{2n-1}\frac{1}{2}\log\left(\frac{x_1-x_j+1}{x_1-x_j-1}\right)
+\frac{1}{2}\log\left(\frac{2x_1+\gamma+\delta+2}{2x_1+\gamma+\delta}\right)+F_{\phi^2}(x_1)=-F_{\phi^1}(x_1)+F_{\phi^2}(x_1)>0\,.$$
In the second step, the pair $x_2$ and $x_{2n-1}$ moves to its new equilibrium position, which means that $x_2$ moves to the right while $x_{2n-1}$ moves to the left. This is because all the changes in the forces that push to $x_2$ have increased except the one that exerts $x_{2n}$, but the effect of $x_{2n}$ on $x_2$ is weaker than the effect of $x_1$ on $x_2$ (the symmetry plays a crucial role in the argument), and the one that exerts $x_{2n-2}$ on $x_2$ which can be seen that it is not relevant (as we have seen in the first step with the movement of $x_{2n}$ and its effect on the movement of $x_1$). The same arguments let us guarantee that in all the steps of the method, the charges in the positive semiaxis move to the right and the ones in the negative semiaxis move to the left.

Finally we point out that, depending on the parameters, the external field can be G-convex in other intervals or union of intervals. Then as consequence of our results, there exists more polynomials $C$ such that the difference equation \eqref{EDDualHahn} has a polynomial solution (maybe different of the Dual Hahn polynomials) which roots are simple, located in these intervals and
with their mutual distances greater than 1. Furthermore, their behaviour can be described using corollaries \ref{CorMonoCeroshpos}, \ref{CorEntrelazamiento} or maybe \ref{CorMonoCeroshpos2}.

\subsection{Racah polynomials}


Racah polynomials $R_n(x;\alpha,\beta,\gamma,\delta)$ with $\alpha+1=-N$ or $\beta+\delta+1=-N$ or $\gamma+1=-N$ and $N\in\mathbb{N}$, are defined in a similar way to Dual Hahn polynomials. They, once we evaluate at the lattice $\lambda(x)=x(x+\gamma+\delta+1)$, are the unique polynomial solution of the difference equation
$$A(x)\Delta_1\nabla_1 y(x)+B(x)\Delta_1 y(x)+C(x) y(x)=0\,,$$
with
\begin{align*}
&A(x)=x(x-\beta+\gamma)(x+\delta)(x-\alpha+\gamma+\delta)(2x+\gamma+\delta+2)\,,\\
&A(x)+B(x)=(x+\alpha+1)(x+\gamma+1)(x+\beta+\delta+1)(x+\gamma+\delta+1)(2x+\gamma+\delta)\,,\\
&C(x)=n(2x+\gamma+\delta)(2x+\gamma+\delta+1)(2x+\gamma+\delta+2)\,.
\end{align*}
They has the hypergeometric representation
$$R_n(\lambda(x);\alpha,\beta,\gamma,\delta)
=\hypergeometric{4}{3}{-n,~n+\alpha+\beta+1,~-x,~x+\gamma+\delta+1}{\alpha+1,~\beta+\delta+1,~\gamma+1}{1}\,,$$
and are orthogonal with respect to the measure
$$\sum_{j=0}^N\frac{(\alpha+1)_j(\beta+\delta+1)_j(\gamma+1)_j(\gamma+\delta+1)_j((\gamma+\delta+3)/2)_j}
{j!(-\alpha+\gamma+\delta+1)_j(-\beta+\gamma+1)_j(\delta+1)_j((\gamma+\delta+1)/2)_j}\delta_j\,.$$
Among many possible choices of the parameters, we choose $\alpha=-N-1$, $\gamma,\delta\geq 0$ and $\beta>\gamma+N$, which guaranties that the orthogonality measure is positive. Hence,  $R_n(\lambda(x);\alpha,\beta,\gamma,\delta)$ has $n$ simple roots, $x_1<x_2<\dots<x_n$ in $(0,N)$ and others $n$, $x_{n+1}<x_{n+2}<\dots<x_{2n}$ in $(-N-\gamma-\delta-1,-\gamma-\delta-1)$ with their mutual distances greater than $1$ and symmetrically distributed with respect to the point $-(\gamma+\delta+1)/2$. The external field
$$\phi^R(x)=\frac{1}{2}\log\frac{A(x)+B(x)}{A(x)}\,,\qquad x\in(0,N)\cup(-N-\gamma-\delta-1,-\gamma-\delta-1)\,,$$
is G-convex since
$$A(0)=A(-N-\gamma-\delta-1)=0\,,\qquad (A+B)(N)=(A+B)(-\gamma-\delta-1)=0$$
and the rest of the roots or poles of $(A+B)/A$ are outside of $(0,N)\cup(-N-\gamma-\delta-1,-\gamma-\delta-1)$ (the decrease of $F_{\phi^R}$ can be proved with straightforward computations). Therefore, the points $x_1,\dots,x_{2n}$ correspond with the $(n,n)$-equilibrium distribution under the external field $\phi^R$.

With the choice of the parameters and with the same arguments that we have seen for Dual Hahn polynomials we can prove that the roots of the Racah polynomials are, for instance, increasing functions of $N=-\alpha-1\in\mathbb{N}$ and decreasing functions of $\beta$.

\section*{Acknowledgments}
The author is supported by Junta de Andalucía (research group FQM-384) and by the IMAG–Maria de Maeztu grant CEX2020-001105-M/AEI/10.13039/501100011033.

\bibliographystyle{amsplain}
\bibliography{DiscreteElectro}

\end{document}